\documentclass[preprint,onecolumn,12pt]{IEEEtran}
\usepackage[a4paper,bindingoffset=0in,left=0.82in,right=0.82in,top=0.9in,bottom=0.9in,footskip=.3in]{geometry}
\usepackage{blindtext}

\usepackage{hyperref}  





\usepackage{graphics}
\usepackage{graphicx}
\usepackage{epsfig}
\usepackage{amsmath} 
\usepackage{amssymb}
\usepackage{color}
\usepackage{amsfonts}
\usepackage{lineno} 
\usepackage{enumerate}
\usepackage{multirow}
\usepackage{arydshln}
\usepackage{caption}
\usepackage{adjustbox}

\allowdisplaybreaks

\newtheorem{theorem}{Theorem}
\newtheorem{definition}[theorem]{Definition}

\newtheorem{lemma}[theorem]{Lemma}

\newtheorem{remark}[theorem]{Remark}

\newcommand{\abs}[1]{\left|#1\right|}

\def\field#1{\mathbb #1}%
\def\R{\field{R}}%
\def\Z{\mathbb{Z}}
\newcommand{\Rn}[1][n]{\R^{#1}}

\newcommand{\Rp}{\R_{\geq 0}}
\newcommand{\Rsp}{\R_{> 0}}
\newcommand{\Zp}{\Z_{\geq 0}}
\newcommand{\Zsp}{\Z_{> 0}}

\def\K{\mathcal{K}}%
\def\Kinf{\K_\infty}%
\def\KL{\mathcal{KL}}%

\def\PD{\mathcal{PD}}%
\let\ol=\overline%
\let\ul=\underline%

\ifCLASSINFOpdf
\else
\fi

\hyphenation{op-tical net-works semi-conduc-tor}
\begin{document}

\title{Integral versions of input-to-state stability for dual-rate nonlinear sampled-data systems}


\author{Navid Noroozi$^\textrm{a}\qquad$\thanks{$^\textrm{a}$Faculty of Computer Science and Mathematics, University of Passau, Innstra$\ss$e 33, 94032 Passau, Germany, \texttt{navid.noroozi@uni-passau.de}} Seyed Hossein~Mousavi$^\textrm{b}\qquad$\thanks{$^\textrm{b}$Department of Aerospace Engineering, University of Ryerson, Toronto, Ontario, Canada, \texttt{seyedhossein.mousavi@ryerson.ca}} Horacio~J.~Marquez$^\textrm{c}\qquad$\thanks{$^\textrm{c}$Department of Electrical and Computer Engineering, University of Alberta, Edmonton, Alberta, Canada, \texttt{hmarquez@ualberta.ca}}
\thanks{The work of N.~Noroozi was partially supported by the Alexander von Humboldt Foundation.}
\thanks{The work was mostly done when S.~H.~Mousavi was at the University of Alberta.}}

\maketitle

\begin{abstract}
This paper presents versions of integral input-to-state stability and integral input-to-integral-state stability for nonlinear sampled-data systems, under the low measurement rate constraint. In particular, we compensate the lack of measurements using an estimator approximately reconstructing the current state. Interestingly, under certain checkable conditions, we establish that a controller that semiglobally practically integral input-to-(integral-) state stabilizes an approximate discrete-time model of a \emph{single-rate} nonlinear sampled-data system, also stabilizes the exact discrete-time model of the nonlinear sampled-data system in the same sense implemented in a \emph{dual-rate} setting. Numerical simulations are given to illustrate the effectiveness of our results.
\end{abstract}

\begin{IEEEkeywords}
Sampled-data systems, Discrete-time approach, Lyapunov methods, Input-to-state stability
\end{IEEEkeywords}

\section{Introduction}

Sampled-data systems consist of a continuous-time plant/process controlled by a digital controller interfaced through analogue-to-discrete (A/D) and discrete-to-analogue (D/A) converters.
When the plant is nonlinear, analysis and design of sampled-data systems present significant challenges and has attracted significant research attention; e.g.,~\cite{Tanasa2016,nkk15,Ustunturk2012,Monaco2007,lna06}.

Sampled-data systems can be designed by \emph{emulation}~\cite{Owens.1990,Castillo.1997,Laila.2002} or in \emph{discrete-time}~\cite{Dochain.1984,Mareels.1992,Nesic.2006}.
In the emulation approach, a continuous-time controller is first designed for the continuous-time plant and then the controller is discretized using any available technique for numerical integration.
In the discrete-time approach, on the other hand, first the plant model is discretized and then a controller is designed for the resulting discrete-time plant.

In general, the emulation approach presents the disadvantage that requires high sampling rates in order to recover the properties of the analog design and guarantee stability and desired performance. 
The discrete-time approach can usually offer similar results at lower sampling rates compared to emulation-based controllers.
Unfortunately, however, finding the exact discrete-time model of a continuous-time plant requires solving an initial value problem whose solution does not exist in closed-form for any practical nonlinear plant model.
A remedy for this issue is to conclude stability properties and/or system performance of the exact discrete-time model from an approximate model of the system.
This treatment has received considerable attention over the last two decades; e.g.,~\cite{ntk99,nl02,nt04,vnh12}.
It should be noted that there have been other approaches such as~\cite{Tanasa2016,Monaco2007}, where smoothness conditions on the model describing the system is required.

In the discrete-time approach, it is usually assumed that all signals in the loop are sampled regularly at the same sampling rate.
Practical considerations, however, make the use of different sampling rates for inputs and outputs highly desirable or even mandatory.
In~\cite{pm04}, Polushin and Marquez consider the dual-rate sampled-data stabilization problem assuming low measurement rates and propose a multirate controller that approximates state trajectories between samples based on a fast numerical integration scheme.
In~\cite{lml08}, the work~\cite{pm04} is extended to include the effect of disturbances, using the concept of input-to-state stability (ISS)~\cite{Sontag.1989}.
Furthermore, extensions to multi-rate output feedback control problem are reported in~\cite{bmr15,Ustunturk2012}.

Major outcome of this body of work is the understanding of under what conditions stability properties of a digital design based on an approximate discrete-time model of a sampled-data system, are also true for the exact (not available to the designer) discrete-time model of the system.
Particularly relevant to this work is the preservation of the important notions of ISS, and integral input-to-state stability (iISS)~\cite{asw00}.
Both concepts are of fundamental importance in control.
Informally, ISS and iISS capture the notion that the system state remains small, regarding the initial conditions, provided that the input (ISS) or integral of the input (iISS) are small.
In~\cite{nl02}, the authors consider the single-rate sampled-data systems and provide sufficient conditions under which a discrete-time controller that input-to-state stabilizes an approximate discrete-time model of a nonlinear plant with disturbances also input-to-state stabilizes the exact discrete-time plant model.
In~\cite{lml08}, the authors consider low measurement frequency restriction and generalize the ISS idea to the dual-rate case.
Integral versions of ISS, called semiglobal practical integral input-to-state stability (SP-iISS) and semiglobal practical integral input-to-integral-state stability (SP-iIiSS), of single-rate sampled-data systems via approximate models are considered in~\cite{na02}.

In this paper, our main interest is the study of dual rate iISS nonlinear sampled-data systems.
As we will see later, this is a source of difficulties with which this work is concerned.
Specially, we use an approximate discrete-time model of the plant to estimate inter-sampled values used by the controller working at the higher rate.
Then we show SP-iI(i)SS for the exact discrete-time model of the dual-rate sampled-data based on checkable sufficient conditions obtained from a \emph{single-rate} approximate model of the sampled-data system.
As a matter of fact, analysis of a single-rate sampled-data system is, in general, much easier than that for its multi-rate counterpart.
Therefore, our results may be preferable to the designer as we conclude stability of a dual-rate system from a single-rate model of the system.
Eventually, we illustrate the effectiveness of our results via a numerical example.

The rest of this paper is organized as follows:
Section \ref{S:Notation} defines the notation used throughout the paper.
Problem statement is given in Section \ref{S:Problem-Statement} and the main results are provided in Section \ref{S:Main-Results}.
An illustrative example is given in Section \ref{S:Examples}.
Finally, concluding remarks are provided as the last section.
 
\section{Notation} \label{S:Notation}

Throughout the paper, $\Rp (\Rsp)$ and $\Zp(\Zsp)$ denote the nonnegative (positive) real and nonnegative (positive) integer numbers, respectively.
We denote the standard Euclidean norm by $\abs{\cdot}$.
A function $\rho \colon \Rp \to \Rp$ is positive definite ($\rho \in \PD$) if it is continuous, zero at zero and positive elsewhere.
A positive definite function $\alpha$ is of class $\K$ ($\alpha \in \K$) if it is  strictly increasing. 
It is of class $\Kinf$ ($\alpha \in \Kinf$) if $\alpha \in \K$ and also
$\alpha(s) \to +\infty$ if $s \to +\infty$.
A continuous function $\beta \colon \Rp \times \Rp \to \Rp$ is of class $\KL$ ($\beta \in \KL$), if for each  $s \geq 0$, $\beta(\cdot,s) \in \K$, and for each $r \geq 0$, $\beta (r,\cdot)$ is decreasing and $\lim_{s \to +\infty} \beta (r,s) \to 0$.
For a given function $w \colon \Rp \to \Rn[p]$, $w_{T}[k]$ represents the restriction of the function $w(\cdot)$ to the interval $[kT,(k+1)T]$, where $k \in \Zp$ and $T > 0$.
Given $\gamma \in \K$ and a measurable function $w \colon \Rp \to \R^{d}$, we define $\abs{w}_\gamma := \int_0^{+\infty} \gamma(\abs{w(s)}) \mathrm{d} s$.
We denote the set of all functions with $\abs{w}_\gamma < + \infty$ by $\mathcal{L}_\gamma$.
Also, if $\abs{w}_\gamma < r$ for some $r \in \Rsp$, we write $w \in \mathcal{L}_\gamma (r)$.
Similarly, let $w \colon \Rp \to \R^{d}$ be a measurable function such that $\abs{w}_{\infty} := \sup_{t \geq 0} \abs{w(t)} < + \infty$.
We denote the set of all such functions by $\mathcal{L}_\infty$. Also, if $\abs{w}_\infty < r$ for some $r \in \Rsp$, we write $w \in \mathcal{L}_\infty (r)$.

\section{Problem Statement} \label{S:Problem-Statement}
Consider the following plant model
\begin{align} \label{eq:e55}
\dot{x} (t) = f(x(t),u (t),w(t)) ,
\end{align}
where $x(t) \in \Rn, u(t) \in \R^{m}$ and $w(t) \in \R^{p}$ are the state, the control input and the disturbance input, respectively. The function $f : \Rn \times \R^{m} \times \R^{p} \to \Rn$ is locally Lipschitz and $f(0,0,0) = 0$. Moreover, we assume that $w(.)$ is measurable and locally essentially bounded. 

We consider the scenario where the plant (\ref{eq:e55}) is connected to a computer control via a zero-order hold (ZOH) and a sampler. Thus the control signal is held constant during the sampling intervals, that is, $u(t) = u(kT)$ for all $t \in [kT,(k+1)T), k \in \Zp$, where $T > 0$ is the sampling period. For simplicity, we abuse our notation and write $u(k) := u(kT)$ and $x(k) := x(kT)$. By $T$ and $T_s$, we, respectively, denote the sampling periods of the ZOH and the sampling devices. We assume that $T_s = \ell T$ for some integer $\ell \geq 1$.
Note that $\ell = 1$ implies the case of single-rate fashion which does not often hold in practice.
The exact discrete-time plant model can be obtained from (\ref{eq:e55}) as follows
\begin{align}
x(k+1) & = x(k) + \int_{kT}^{(k+1)T} { f (x(\tau),u(k), w(\tau)) } \mathrm{d} \tau =: F_T^e (x(k),u(k),w_T[k]). & \label{eq:exact-model}
\end{align}
Unfortunately, this requires solving the initial value problem \eqref{eq:exact-model} which does not have a closed-form solution, in most cases of interest.
Consistent with the literature on sampled-data systems, we use a family of approximate discrete-time models
\begin{align} \label{eq:app-model}
x(k+1) = F^a_{T,h} (x(k),u(k),w_T[k]) ,
\end{align}
where $h$ is the parameter of the numerical integration and used to enhance the accuracy between the approximate discrete-time model~\eqref{eq:app-model} and the exact discrete-time model~\eqref{eq:exact-model}.
Although one can simply take $h = T$ and use some classic approximation methods such as the Euler method, it is more appropriate to choose $h$ different from $T$, as shown in~\cite{nt04} for instance.

The mismatch between the exact discrete-time model~\eqref{eq:exact-model} and the approximate discrete-time model~\eqref{eq:app-model} is required to be small in the following sense.

\begin{definition}  \label{D:one-step-cons} (\cite{Arcak.2004,lml08})
The family $F_{T,h}^{a}$ is said to be one-step consistent with $F_{T}^{e}$ if for any real numbers $(\Delta_x,\Delta_u,\Delta_w)$ there exist a function $\rho \in \Kinf$ and $T^* > 0$ such that for each fixed $T \in (0,T^*)$ there exists $h^* \in (0,T]$ such that the following holds
\begin{align*}
& |F_T^e (x,u,w_T) - F_{T,h}^a (x,u,w_T)| \leq T \rho (h)
\end{align*}
for all $x \in \Rn$, $u \in \R^m$, $w \in \mathcal{L}_\infty$ with $\abs{x} \leq \Delta_x$, $\abs{u} \leq \Delta_u$, $\abs{w_T}_\infty \leq \Delta_w$ and all $h \in (0,h^*)$. $\hfill \Box$
\end{definition}
Sufficient checkable conditions under which the one-step consistency is guaranteed are given in~\cite{nl02,nt04}.

Assuming full access to the state $x$, we consider a family of controllers given by
\begin{align}
	& u (k) = u_{T,h} (x_c(k)), \;\;\;\;    u_{T,h} (0) =0 , & \label{eq:e04}
\end{align}
where 
\begin{equation} \label{eq:e02}
x_c (k) = \left\{ \begin{array}{l}
 x(k), \quad k = i \ell , i \in \Zp \\
  F^a_{T,h} (x_c(k-1),u(k-1),0) \textrm{  with } x_c(i\ell) = x(i\ell), \textrm{ otherwise}.
 \end{array} \right.
\end{equation}
In other words, the controller uses the sampled value of the state every $T_s$ seconds; however, to compensate for the lack of information in the inter-samples the controller uses estimated state values obtained from the disturbance-free plant model. It should be noted that the family of controllers~\eqref{eq:e04} and~\eqref{eq:e02} reduces to 
\begin{align} \label{cont-single-rate}
	& u (k) = u_{T,h} (x(k)), \;\;\;\;    u_{T,h} (0) =0 , & 
\end{align}
when $\ell =1$.

The following definition ensuring a so-called \emph{uniform local boundedness} of $u_{T,h}$~\cite{ntk99,lml08} is required for proofs of the main results (Theorems~\ref{thm:iISS} and~\ref{thm:iIiSS} below).

\begin{definition} \label{def:uni-loc-lip} (\cite{lml08})
The control law $u_{T,h}$ is said to be uniformly locally Lipschitz if for any $\Delta_x > 0$ there exist positive real numbers $T^*$ and $\tilde{L}$ such that for each fixed $T \in (0,T^*)$ there exists $h^* \in (0,T]$ such that 
\begin{align*}
& |u_{T,h} (x_2) - u_{T,h} (x_1)| \leq \tilde{L} |x_2 - x_1| 
\end{align*}
for all $h \in (0,h^*)$ and all $x_1 , x_2 \in \Rn$ with $\max\{|x_1|,|x_2|\} \leq \Delta_x$. $\hfill \Box$
\end{definition}

We conclude iISS of the dual-rate sampled-data system based on the following Lyapunov stability property of the approximate discrete-time model of the \emph{single-rate} sampled-data system \eqref{eq:app-model} and~\eqref{cont-single-rate} that is denoted by
\begin{align} \label{eq:family-app}
& x(k+1) = \mathcal{F}^a_{T,h} (x(k),w_T[k]) . &
\end{align}

\begin{definition} \label{def:lyapunov-sp-iiss}
The family of systems \eqref{eq:family-app} is Lyapunov semiglobally practically integral input-to-state stable (Lyapunov-SP-iISS) if there exist functions $\ul\alpha,\ol\alpha \in \Kinf$, $\alpha \in \mathcal{PD}$ and $\hat\gamma \in \K$ with the following properties:

For any positive real numbers $(\Delta_1,\Delta_2,\Delta_3,\delta_1)$ there exist $T^* > 0$ and $M > 0$ such that for any fixed $T \in (0,T^*)$ there exists $h^* \in (0,T]$ such that for all $h \in (0,h^*)$ there exists a continuous function $V_{T,h} \colon \R^n \to \Rp$ such that for all $x \in \R^{n}$ with $|x| \leq \Delta_1$ and all $w \in \mathcal{L}_\infty (\Delta_2) \cap \mathcal{L}_{\hat\gamma} (\Delta_3)$ the following hold
\begin{align}
& \underline{\alpha} (|x|) \leq V_{T,h}(x) \leq \overline{\alpha} (|x|) , & \label{eq:e08} \\
& V_{T,h}(\mathcal{F}^a_{T,h} (x,w_T)) - V_{T,h}(x) \leq T \left[ - \alpha(|x|) + \frac{1}{T} \int_{kT}^{(k+1)T} \hat\gamma(\abs{w(s)}) \mathrm{d}s + \delta_1 \right] ; & \label{eq:e09}
\end{align}
moreover, 
\begin{align} \label{eq:0909}
& |V_{T,h} (x_1) - V_{T,h}(x_2)| \leq M |x_1 - x_2| &
\end{align}
for all $x_1 , x_2 \in \R^{n}$ with $\max \{ |x_1| , |x_2| \} \leq \Delta_1$, all $T \in (0,T^*)$ and all $h \in (0,h^*)$.
The function $V_{T,h}$ is also called a SP-iISS-Lyapunov function for the family of systems~\eqref{eq:family-app}.\\
Specially, if the previous conditions hold with $\alpha \in \Kinf$, then the family of systems \eqref{eq:family-app} is Lyapunov semiglobally practically integral input-to-integral-state stable (Lyapunov-SP-iIiSS) and the corresponding function $V_{T,h}$ is called a SP-iIiSS-Lyapunov function for the family of systems \eqref{eq:family-app}.$\hfill \Box$
\end{definition}
We note that special case of Definition~\ref{def:lyapunov-sp-iiss}, where $h = T$ has been given in~\cite{na02}.

Denote the exact discrete-time model of the dual-rate sampled-data system \eqref{eq:exact-model},~\eqref{eq:e04} and \eqref{eq:e02} by
\begin{align} \label{eq:family-exact}
& x(k+1) = \mathcal{F}^e_{T,h} (x(k),w_T[k]) . &
\end{align}
We are interested in stability properties of the family of systems \eqref{eq:family-exact}, that are defined below.
\begin{definition} \label{def:sp-iiss}
The family of systems \eqref{eq:family-exact} is said to be semiglobally practically integral-input-to-state stable (SP-iISS) if there exist $\alpha \in \Kinf, \beta \in \mathcal{KL}$ and $\gamma \in \K$ such that  for any positive real numbers $(\Delta_{x},\Delta_{w_1},\Delta_{w_2},\delta)$, there exists $T^* > 0$ such that for each $T \in (0,T^*)$ there exists $h^* \in (0,T]$ such that for all $w \in \mathcal{L}_\infty (\Delta_{w_1}) \cap \mathcal{L}_\gamma (\Delta_{w_2})$,  all $x(0) \in \Rn$ with $|x(0)| \leq \Delta_{x}$ and all $h \in (0,h^*)$, each solution to \eqref{eq:family-exact} exists and satisfies
\begin{align*}
& \alpha(\abs{x(k)}) \leq \beta (|x(0)|,k) + \int_{0}^{kT} \gamma(\abs{w(s)}) \mathrm{d}s + \delta  &
\end{align*}
for all $k \in \Zp$. $\hfill \Box$
\end{definition}

\begin{definition}
The family of systems \eqref{eq:family-exact} is said to be semiglobally practically integral-input-to-integral-state stable (SP-iIiSS) if there exist $\alpha , \chi \in \Kinf$ and $\gamma \in \K$ such that for any positive real numbers $(\Delta_x,\Delta_w,\delta)$, there exists $T^* > 0$ such that for each $T \in (0,T^*)$ there exists $h^* \in (0,T]$ such that for all $w \in \mathcal{L}_\infty (\Delta_w)$, all $x(0) \in \Rn$ with $|x(0)| \leq \Delta_{x}$ and all $h \in (0,h^*)$, each solution to \eqref{eq:family-exact} exists and satisfies
\begin{align*}
& T \sum_{i = 0}^{k-1} \alpha(\abs{x(i)}) \leq \chi (|x(0)|) + \int_{0}^{kT} \gamma(\abs{w(s)}) \mathrm{d}s + T k \delta  &
\end{align*}
for all $k \in \Zp$. $\hfill \Box$
\end{definition}

\begin{remark}
Regarding Definition~\ref{def:sp-iiss}, different versions of semiglobal iISS have been also reported in~\cite{Angeli.2000}, where the gain functions $\beta$ and/or $\gamma$ \emph{depend} on the positive real numbers $(\Delta_x,\Delta_w)$. $\hfill \Box$
\end{remark}

\section{Main Results} \label{S:Main-Results}

This section provides sufficient conditions for SP-iI(i)SS of the family of systems \eqref{eq:family-exact}.
In particular, our first contribution establishes SP-iISS of the exact discrete-time models \eqref{eq:family-exact} of the dual-rate sample-data under checkable conditions obtained from the single-rate approximate discrete-time model of the system.
\begin{theorem} \label{thm:iISS}
Assume that the following hold
\begin{enumerate}[i)]
    \item\label{item:iISS-app} The family of the approximate discrete-time models \eqref{eq:family-app} is Lyapunov-SP-iISS;
	\item\label{item:iISS-one-step} $F_{T,h}^a$ is one-step consistent with $F_T^e$;
	\item\label{item:iISS-lips} $u_{T,h}$ is uniformly locally Lipschitz.
\end{enumerate}
Then the family of exact discrete-time models \eqref{eq:family-exact} is SP-iISS.
\end{theorem}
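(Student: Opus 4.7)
The plan is to use the SP-iISS-Lyapunov function $V_{T,h}$ provided by hypothesis~\ref{item:iISS-app} for the single-rate approximate system \eqref{eq:family-app} as a candidate Lyapunov function along trajectories of the dual-rate exact system \eqref{eq:family-exact}, and to absorb the mismatch caused by (a) replacing the approximate single-rate update by the exact dual-rate one and (b) using the estimate $x_c$ in place of $x$ at non-measurement instants, into a small residual. Given $(\Delta_x,\Delta_{w_1},\Delta_{w_2},\delta)$, I first pick $\Delta_1>\Delta_x$ large enough (to be fixed by a bootstrap argument), set $(\Delta_2,\Delta_3,\delta_1):=(\Delta_{w_1},\Delta_{w_2},\delta/2)$, and invoke Definition~\ref{def:lyapunov-sp-iiss} to obtain $T^*$, $M$, $h^*$, and a function $V_{T,h}$ satisfying \eqref{eq:e08}--\eqref{eq:0909}.

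The central computation decomposes the one-step difference along \eqref{eq:family-exact} as
\begin{align*}
V_{T,h}(x(k+1))-V_{T,h}(x(k))
&=\bigl[V_{T,h}(\mathcal{F}^{e}_{T,h}(x(k),w_T[k]))-V_{T,h}(\mathcal{F}^{a}_{T,h}(x(k),w_T[k]))\bigr]\\
&\quad+\bigl[V_{T,h}(\mathcal{F}^{a}_{T,h}(x(k),w_T[k]))-V_{T,h}(x(k))\bigr],
\end{align*}
where the second bracket is bounded by \eqref{eq:e09}. For the first, $\mathcal{F}^{e}_{T,h}(x,w_T)=F^{e}_{T}(x,u_{T,h}(x_c),w_T)$ whereas $\mathcal{F}^{a}_{T,h}(x,w_T)=F^{a}_{T,h}(x,u_{T,h}(x),w_T)$. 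Inserting the intermediate quantity $F^{a}_{T,h}(x,u_{T,h}(x_c),w_T)$ and successively applying \eqref{eq:0909}, hypothesis~\ref{item:iISS-one-step}, and a Lipschitz bound of $F^{a}_{T,h}$ in the control argument (standard from local Lipschitzness of $f$ combined with one-step consistency), this bracket is dominated by $MT\rho(h)+MT\tilde{L}_{u}|u_{T,h}(x_c(k))-u_{T,h}(x(k))|$, which by hypothesis~\ref{item:iISS-lips} is in turn bounded by $MT\rho(h)+MT\tilde{L}_{u}\tilde{L}|x(k)-x_c(k)|$.

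The main obstacle is then to control $|x(k)-x_c(k)|$ on each inter-measurement block $k\in\{i\ell,\ldots,(i+1)\ell-1\}$. Starting from $x_c(i\ell)=x(i\ell)$, I write $x(k+1)-x_c(k+1)=F^{e}_{T}(x(k),u_{T,h}(x_c(k)),w_T[k])-F^{a}_{T,h}(x_c(k),u_{T,h}(x_c(k)),0)$, insert $F^{a}_{T,h}(x(k),u_{T,h}(x_c(k)),w_T[k])$, and once more apply \ref{item:iISS-one-step} together with Lipschitzness of $F^{a}_{T,h}$ in $(x,w)$ to obtain a recursion of the form
\[
|x(k+1)-x_c(k+1)|\leq(1+cT)|x(k)-x_c(k)|+T\rho(h)+cT|w_{T}[k]|_{\infty},
\]
which iterates over at most $\ell-1$ steps and yields $|x(k)-x_c(k)|\leq C(\ell)\bigl(T\rho(h)+T\Delta_{w_1}\bigr)$, valid as long as both $|x(k)|$ and $|x_c(k)|$ remain bounded by $\Delta_1$. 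Substituting back, the total perturbation to the dissipation inequality of Definition~\ref{def:lyapunov-sp-iiss} is of order $O(T\rho(h))+O(T^{2})$; shrinking $T^*$ and then $h^*$ further absorbs it into $T\delta_1$, producing
\[
V_{T,h}(x(k+1))-V_{T,h}(x(k))\leq T\Bigl[-\alpha(|x(k)|)+\tfrac{1}{T}\int_{kT}^{(k+1)T}\hat\gamma(|w(s)|)\,\mathrm{d}s+2\delta_1\Bigr].
\]

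Finally, summing this inequality from $0$ to $k-1$, sandwiching with \eqref{eq:e08} and using $\alpha\in\mathcal{PD}$ together with $\ul\alpha,\ol\alpha\in\Kinf$, and invoking a standard discrete-time comparison/$\mathcal{KL}$ argument converts the telescoped bound into the trajectory estimate of Definition~\ref{def:sp-iiss} with residual proportional to $\delta_1$; choosing $\delta_1$ small enough relative to $\delta$ at the outset then yields the prescribed $\delta$. A bootstrap on $\Delta_1$---fixed a priori large enough that the derived bound never drives the trajectory past $\Delta_1$---closes the argument and legitimizes all the local estimates used above, establishing SP-iISS of \eqref{eq:family-exact}.
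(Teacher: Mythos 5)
Your Lyapunov-difference estimate follows the paper's strategy closely: the same decomposition $V^e_{k+1}-V_k=(V^a_{k+1}-V_k)+(V^e_{k+1}-V^a_{k+1})$, a bound on $\abs{\mathcal{F}^e_{T,h}-\mathcal{F}^a_{T,h}}$ obtained by inserting an intermediate single-rate quantity and combining one-step consistency with a Lipschitz/Gronwall argument, and a recursion for $\abs{x(k)-x_c(k)}$ over each inter-measurement block that is essentially the content of Lemma~\ref{L:02} (which the paper imports from \cite{lml08}). The minor difference that you insert $F^a_{T,h}(x,u_{T,h}(x_c),w_T)$ while the paper inserts the exact single-rate map $\mathcal{\tilde{F}}^e_{T,h}$ is immaterial, although you should note that Lipschitzness of $F^a_{T,h}$ in the control argument is not a standing assumption and must itself be derived from one-step consistency plus a Gronwall bound on $F^e_T$, which costs an extra $O(T\rho(h))$ term (harmless here).

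The genuine gap is in the last step. You set $\delta_1=\delta/2$ and then claim that a ``standard discrete-time comparison/$\KL$ argument'' converts the perturbed dissipation inequality into the SP-iISS estimate, ``using $\alpha\in\PD$.'' This does not work as stated, for two reasons. First, $\alpha$ is only positive definite, so $\alpha(\abs{x})$ may be arbitrarily small for $\abs{x}$ large; the offset $2\delta_1$ therefore cannot be absorbed into the decay merely by taking $\delta_1$ small ``relative to $\delta$.'' The paper needs the factorization $\alpha(\cdot)\geq\tilde\rho_1(\cdot)\tilde\rho_2(\cdot)$ with $\tilde\rho_1\in\Kinf$ and $\tilde\rho_2$ decreasing (\cite[Lemma~IV.1]{asw00}) together with the $\Delta_1$-dependent choice $\delta_1=\rho_1(\delta/2)\rho_2(\Delta_1)/4$, precisely so that $\rho_1(V_k)\rho_2(V_k)\geq 4\delta_1$ on the whole working range $V_k\in[\delta/2,\Delta_1]$; this is what lets half the decay dominate the residual and, simultaneously, what makes the bootstrap ($V_k$ never exceeds $\ol\alpha(\Delta_x)+\Delta_3+\delta$, hence $\abs{x(k)}<\Delta_1$) close without circularity. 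Second, the target estimate in Definition~\ref{def:sp-iiss} carries an additive integral gain $\int_0^{kT}\gamma(\abs{w(s)})\,\mathrm{d}s$ rather than a sup-type gain, so a standard ISS comparison lemma does not apply; the paper introduces the shifted variable $y_k=V_k-\int_0^{kT}\hat\gamma(\abs{w(s)})\,\mathrm{d}s$, verifies $y_{k+1}\leq y_k+\delta/2$ together with $y_{k+1}-y_k\leq-T\tilde\rho(\max\{y_k+w_k,0\})$ on the relevant range, and only then invokes \cite[Lemma~3]{na02} to produce the $\KL$ bound for $y_k$. Without the factorization, the corrected choice of $\delta_1$, and the $y_k$ shift, your final paragraph is an assertion rather than a proof.
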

\begin{proof}
Let positive real numbers $(\Delta_{x},\Delta_{w_1},\Delta_{w_2},\delta)$ be given.
Also, let $\alpha \in \mathcal{PD}$ and $\hat\gamma \in \mathcal{K}$ come from item~\ref{item:iISS-app}) of the theorem.
According to~\cite[Lemma IV.1]{asw00}, there exist $\tilde\rho_1 \in \Kinf$ and $\tilde\rho_2 \in \mathcal{L}$ such that $\alpha(\cdot) \geq \tilde\rho_1 (\cdot) \tilde\rho_2 (\cdot)$.
Denote $\rho_1 (\cdot) := \tilde\rho_1 \circ \overline\alpha^{-1} (\cdot)$ and $\rho_2 (\cdot) := \tilde\rho_2 \circ \underline\alpha^{-1} (\cdot)$.
Take $\Delta_1 := \underline\alpha^{-1} (\overline\alpha (\Delta_{x}) + \Delta_{w_2} + \delta) + 1$, $\Delta_2 := \Delta_{w_1}$, $\Delta_3 := \Delta_{w_2}$ and $\delta_1 := \frac{\rho_1(\delta/2)\rho_2(\Delta_1)}{4}$.
Let $\Delta_1$ generate $T^*_1$, $h^*_1$ and $\tilde L$ from item~\ref{item:iISS-lips}).
Let the data $(\Delta_1,\Delta_2,\Delta_3,\delta_1)$ generate $T^*_2$, $ h^*_2$, $M$ and a SP-iISS Lyapunov for the family $\mathcal{F}_{T,h}^a$.
By slight abuse of notation, denote $V_k := V_{T,h} (x (k))$, $V^e_{k+1} := V_{T,h} (\mathcal{F}^e_{T,h} (x (k) , w_T[k]))$ and $V^a_{k+1} := V_{T,h} (\mathcal{F}^a_{T,h} (x(k), w_T[k]))$ for all $k\in\Zp$.
Take any $\varepsilon_1 > 0$.
Let $(\Delta_1,\Delta_2,\varepsilon_1)$ generate $T^*_3$, $h^*_3$ and $L$ from Lemma~\ref{L:02} (see~\ref{app:technical-lemmas}).
By item~\ref{item:iISS-lips}), there exist $T^*_4>0$, $h^*_4>0$ and $\Delta_u>0$ such that $\abs{u_{T,h} (x)} \leq \Delta_u$ for all $x \in \Rn$ with $\abs{x} \leq \Delta_1$, all $T \in (0,T^*_4)$ and all $h \in (0,h^*_4)$.
Also, let $(\Delta_1,\Delta_u,\Delta_2)$ generate $\rho$, $T^*_5$ and $h^*_5$ from item~\ref{item:iISS-one-step}).
Take $T^*_6$ and $h^*_6$ such that
$$
M \left[ \rho(h^*_6) + \tilde{L} (e^{LT^*_6} - 1) \left(T^*_6 \varepsilon_1 + L e^{L(\ell-1)T^*_6} (\ell-1) \Delta_2 \right) \right] \leq \delta_1 .
$$
Pick $T^*_7$ and $h^*_7$ small enough such that
$$
T^*_7  \Big[\hat\gamma (\Delta_2) + \delta_1 + M \big[\rho(h^*_7) + \tilde{L} (e^{LT^*_7} - 1) \left(T^*_7 \varepsilon_1 + L e^{L(\ell-1)T^*_7} \Delta_3 \right) \big] \Big] \leq \frac{\delta}{2} .
$$
Let $\tilde\delta > 0$ be such that $\underline\alpha^{-1} (\overline\alpha (\Delta_{x}) + \Delta_3 + \delta + \tilde\delta) \leq \underline\alpha^{-1} (\overline\alpha (\Delta_{x}) + \Delta_3 + \delta ) + 1/2$. Let $T^*_8$ and $h_8^*$ be such that
$$
T^*_8  \Big[ \rho(h^*_8) + \tilde{L} (e^{LT^*_8} - 1) \left(T^*_8 \varepsilon_1 + (\ell-1) L e^{L(\ell-1)T^*_8} \Delta_2 \right) \Big] \leq \frac{1}{2} .
$$
Take $T^*_9$ such that $(\hat\gamma (\Delta_2) + \delta_1)T^*_9 \leq \tilde\delta$. Denote $T^* := \min \{ \min_{i\in\{1,\cdots,9\}} T^*_i , 1 \}$ and $h^*$ $:=$ $\min \{ \min_{i\in\{1,\cdots,8\}} h_i^*, T^* \}$.

Pick an arbitrary $x(k)$ such that $V_k \leq \ol\alpha (\Delta_x) + \Delta_3 + \delta$. It follows from the fact that the family of the approximate discrete-time model $\mathcal{F}_{T,h}^a$ is Lyapunov-SP-iISS that
\begin{align*}
V^e_{k+1} - V_k = & V^a_{k+1} - V_k + V^e_{k+1} - V^a_{k+1} & \\
\leq & T \left[ - \alpha(\abs{x(k)}) + \frac{1}{T} \int_{kT}^{(k+1)T} \hat\gamma(\abs{w(s)}) \mathrm{d}s + \delta_1 \right]  + \abs{V^e_{k+1} - V^a_{k+1}} . &
\end{align*}
From item~\ref{item:iISS-app}) of the theorem and our choice of $T^*_9$, we have
\begin{align*}
\underline{\alpha} (\abs{\mathcal{F}^a_{T,h}(x,w_T)}) & \leq V^a_{k+1} \leq V_k + T (\hat\gamma (\Delta_2) + \delta_1) < \bar{\alpha} (\Delta_x) + \Delta_3 + \delta + \tilde\delta . &
\end{align*}
So we get
\begin{align*}
\abs{\mathcal{F}^a_{T,h}(x,w_T)} & \leq \underline{\alpha}^{-1} (\bar{\alpha} (\Delta_x) + \Delta_3 + \delta + \tilde\delta ) .
\end{align*}
It follows from the choice of $\tilde\delta$ and the definition of $\Delta_1$ that
\begin{align} \label{eq:e20}
\abs{\mathcal{F}^a_{T,h}(x,w_T)} \leq \underline{\alpha}^{-1} (\bar{\alpha} (\Delta_x) + \Delta_3 + \delta) +\frac{1}{2} < \Delta_1 .
\end{align}
We note that
\begin{align*}
\abs{\mathcal{F}^e_{T,h}(x,w_T) - \mathcal{F}^a_{T,h}(x,w_T)} \leq &  \abs{\mathcal{F}^e_{T,h}(x,w_T) - \mathcal{\tilde{F}}^e_{T,h}(x,w_T)} & \\
& + \abs{\mathcal{\tilde{F}}^e_{T,h}(x,w_T) - \mathcal{F}^a_{T,h}(x,w_T)} , &
\end{align*}
where $\mathcal{\tilde{F}}^e_{T,h}(x,w_T)$ describes the exact discrete-time model of the single-rate sampled-data system~\eqref{eq:exact-model} and~\eqref{cont-single-rate}, that is, $x(k+1) = \mathcal{\tilde{F}}^e_{T,h} (x(k),w_T[k])$.
By application of the standard version of Gronwall inequality~\cite{Khalil.2002} and item~\ref{item:iISS-lips}) of the theorem, we have
\begin{align} \label{eq:fe-fes}
\abs{\mathcal{F}^e_{T,h}(x(k),w_T[k]) - \mathcal{\tilde{F}}^e_{T,h}(x(k),w_T[k])} \leq \tilde{L} (e^{LT} - 1) \abs{x(k) - x_c (k)} .
\end{align}
It follows from $T^*_4$, $h^*_4$, $T^*_5$ and $h^*_5$ that
\begin{align}\label{eq:fes-fas}
\abs{\mathcal{\tilde{F}}^e_{T,h}(x(k),w_T[k]) - \mathcal{F}^a_{T,h}(x(k),w_T[k])} \leq T \rho (h) .
\end{align}
Given~\eqref{eq:fe-fes} and~\eqref{eq:fes-fas}, we have
\begin{align} \label{eq:e22}
& \abs{\mathcal{F}^e_{T,h}(x(k),w_T[k]) - \mathcal{F}^a_{T,h}(x(k),w_T[k])} \leq T (\rho(h) + \tilde{L} (e^{LT} \! - \! 1) \abs{x(k)-x_c(k)} ) . &
\end{align}
Applying Lemma~\ref{L:02} to the right-hand side of (\ref{eq:e22}) yields
\begin{align*}
&\abs{\mathcal{F}^e_{T,h}(x,w_T) - \mathcal{F}^a_{T,h}(x,w_T)} & \\
& \leq T \left(\rho(h) + \tilde{L} (e^{LT} \! - \! 1) \left(T \varepsilon_1 + L \sum_{i=0}^{\ell-2} e^{L(i+1)T} \int_{(k-i - 1)T}^{(k-i)T}\abs{w(s)} \mathrm{d}s \right) \right)  & \nonumber \\
& \leq T \left(\rho(h) + \tilde{L} (e^{LT} \! - \! 1) \left(T \varepsilon_1 + L e^{L(\ell-1)T} \sum_{i=0}^{\ell-2} \int_{(k-i - 1)T}^{(k-i)T}\abs{w(s)} \mathrm{d}s \right) \right) . &
\end{align*}
It follows from the fact that $w \in \mathcal{L}_\infty(\Delta_2) \cap \mathcal{L}_{\hat\gamma}(\Delta_3)$ that
\begin{align*}
& \abs{\mathcal{F}^e_{T,h}(x,w_T) \! - \! \mathcal{F}^a_{T,h}(x,w_T)} \! \leq \! T \!\left(\!\rho(h) \! + \! \tilde{L} (e^{LT} \! - \! 1) \! \left(T \varepsilon_1 \!+\! (\ell-1) L e^{L(\ell-1)T} \Delta_2 \right) \! \right) \! . &
\end{align*}
From the choice of $T^*_8$ and $h_8^*$, we get
\begin{align} \label{eq:e21}
& \abs{\mathcal{F}^e_{T,h}(x,w_T) - \mathcal{F}^a_{T,h}(x,w_T)} \leq \frac{1}{2} . &
\end{align}
Exploiting the first inequality of (\ref{eq:e20}) and (\ref{eq:e21}) gives
\begin{align*}
\abs{\mathcal{F}^e_{T,h}(x,w_T)} & \leq \abs{\mathcal{F}^a_{T,h}(x,w_T)} + \abs{\mathcal{F}^e_{T,h}(x,w_T) - \mathcal{F}^a_{T,h}(x,w_T)} & \\
& \leq \underline{\alpha}^{-1} (\bar{\alpha} (\Delta_x) + \Delta_3 + \delta)+\frac{1}{2} + \frac{1}{2} . &
\end{align*}
From the definition of $\Delta_1$, we have
\begin{align} \label{eq:fe-delta}
& \abs{\mathcal{F}^e_{T,h}(x,w_T)} \leq \Delta_1 . &
\end{align}
By \eqref{eq:0909}, the second inequality of~\eqref{eq:e20} and~\eqref{eq:fe-delta}, we get
\begin{align*}
V^e_{k+1} - V_k \leq & T \left[ - \alpha(\abs{x(k)}) + \frac{1}{T} \int_{kT}^{(k+1)T} \hat\gamma(\abs{w(s)}) \mathrm{d}s + \delta_1 \right] \\
& + M \abs{\mathcal{F}^e_{T,h}(x,w_T) - \mathcal{F}^a_{T,h}(x,w_T)} .
\end{align*}
It follows from (\ref{eq:e22}) that
\begin{align*}
V^e_{k+1} - V_k \leq & T \left[ - \alpha(\abs{x(k)}) + \frac{1}{T} \int_{kT}^{(k+1)T} \hat\gamma(\abs{w(s)}) \mathrm{d}s + \delta_1 \right] & \nonumber\\
& + M \Big[ T\rho(h) + \tilde{L} (e^{LT} - 1) \abs{x(k)-x_c(k)} \Big].
\end{align*}
By Lemma~\ref{L:02}, we have
\begin{align*}
& V^e_{k+1} - V_k \leq T \left[ - \alpha(\abs{x(k)}) + \frac{1}{T} \int_{kT}^{(k+1)T} \hat\gamma(\abs{w(s)}) \mathrm{d}s + \delta_1 \right]&  \\
& \quad	+ M \Bigg[ T \rho(h) + \tilde{L} (e^{LT} - 1) \left(T \varepsilon_1 + L \sum_{i=0}^{\ell-2} e^{L(i+1)T} \int_{(k-i - 1)T}^{(k-i)T}\abs{w(s)} \mathrm{d}s \right) \Bigg] & \\
 &\leq T \left[ - \alpha(\abs{x(k)}) + \frac{1}{T} \int_{kT}^{(k+1)T} \hat\gamma(\abs{w(s)}) \mathrm{d}s + \delta_1 \right] \nonumber\\
& \quad+ M \Bigg[ T \rho(h) + \tilde{L} (e^{LT} - 1) \left(T \varepsilon_1 + L e^{L(\ell-1)T} \sum_{i=0}^{\ell-2} \int_{(k-i - 1)T}^{(k-i)T}\abs{w(s)} \mathrm{d}s \right) \Bigg] . &
\end{align*}
Given the fact that $w \in \mathcal{L}_\infty(\Delta_2) \cap \mathcal{L}_{\hat\gamma}(\Delta_3)$, we obtain
\begin{align*}
V^e_{k+1} - V_k \leq & T \left[ - \alpha(\abs{x(k)}) + \frac{1}{T} \int_{kT}^{(k+1)T} \hat\gamma(\abs{w(s)}) \mathrm{d}s + \delta_1 \right] \nonumber\\
& + T M \Bigg[ \rho(h) + \tilde{L} (e^{LT} - 1) \left( \varepsilon_1 + L e^{L(\ell-1)T} (\ell-1) \Delta_2 \right) \Bigg] .
\end{align*}
By the definitions of $T^*_6$ and $h^*_6$, we get
\begin{align*}
V^e_{k+1} - V_k \leq & T \left[ - \alpha(\abs{x(k)}) + \frac{1}{T} \int_{kT}^{(k+1)T} \hat\gamma(\abs{w(s)}) \mathrm{d}s + 2 \delta_1 \right] .
\end{align*}
It follows from the choice of $\tilde\rho_1(\cdot)$ and $\tilde\rho_2(\cdot)$, and the definitions of $\rho_1(\cdot)$ and $\rho_2(\cdot)$ that
\begin{align*}
V^e_{k+1} - V_k \leq & T \left[ - \rho_1(V_k) \rho_2 (V_k) + \frac{1}{T} \int_{kT}^{(k + 1)T} \hat\gamma(\abs{w(s)}) \mathrm{d}s + 2 \delta_1 \right] .
\end{align*}
Without loss of generality assume that $\ul\alpha$, coming from item~\ref{item:iISS-app}), satisfies $\ul\alpha (s) \leq s$ for all $s \geq 0$.
Then from the definition of $\Delta_1$ and the choice of $x(k)$, $\ul\alpha^{-1} (V_k) < \Delta_1$ implies $V_k < \ul\alpha(\Delta_1) \leq \Delta_1$.
It follows from the fact that $\rho_1(s) \rho_2 (s) \geq 4\delta_1$ for all $s \in [\delta/2 , \Delta_1]$ that
\begin{align}
V^e_{k+1} - V_k \leq & T \left[ - \frac{1}{2}\rho_1(V_k) \rho_2 (V_k) + \frac{1}{T} \int_{kT}^{(k + 1)T} \hat\gamma(\abs{w(s)}) \mathrm{d}s \right] & \label{eq:e10}
\end{align}
when $\Delta_1 \geq V_k \geq \frac{\delta}{2}$. Furthermore, with the choice of $T^*_7$ and $h^*_7$, we have
\begin{align}
V^e_{k+1} \leq V^a_{k+1} + \abs{V^e_{k+1} - V^a_{k+1}} \leq V_k + \frac{\delta}{2} . \label{eq:e11}
\end{align}
Define $w_k := \int_0^{kT} \hat\gamma (\abs{w(s)}) \mathrm{d}s$ and $y_k := V_k - w_k$. We emphasize that $w_0 = 0$ and $w_k$ is nondecreasing.
Denote $\tilde\rho(\cdot) := \frac{1}{2} \rho_1(\cdot)\rho_2(\cdot)$. 
From the definitions of $w_k$ and $y_k$,~\eqref{eq:e11} and the fact that $V^e_{k+1} - w_{k+1} \leq V^e_{k+1} - w_k$, we have
\begin{align*}
& y_{k+1} \leq y_k + \frac{\delta}{2} . &
\end{align*}
Moreover, $V_k \geq \frac{\delta}{2}$ implies that $y_k \geq \frac{\delta}{2} - w_k \geq \frac{\delta}{2}$. This together with (\ref{eq:e10}) gives
\begin{align*}
& y_{k+1} - y_k \leq - T \tilde\rho (\max \{ y_k + w_k , 0 \}) &
\end{align*}
whenever $y_k \leq \overline\alpha(\Delta_x) + \delta$.
It should be pointed out that $y_k \geq - \Delta_3$ for all $k \in \Zp$ because $V_k \geq 0$ and $w_k \leq \Delta_3$.
With the same arguments as those in \cite{na02}, for any $y_0 \in [ 0 , \overline\alpha(\Delta_x) + \delta]$ we have that $y_k \in [ - \Delta_3 , \overline\alpha(\Delta_x) + \delta ]$ for all $k \in \mathbb{Z}_{\geq 0}$.
This implies that $V_k \leq \ol\alpha (\Delta_x) + \Delta_3 + \delta$ for all $k \in \Zp$ and thus we can repeat all the above arguments for each $k$.
Moreover, all conditions of \cite[Lemma 3]{na02} are met with $\Delta_y = \overline\alpha(\Delta_x) + \delta$, $c_1 = c_2 = \frac{\delta}{2}$ and $k^* = +\infty$ therein. Therefore, there exists $\beta \in \mathcal{KL}$ such that the following holds
\begin{align*}
y_k \leq \beta (y_0 , kT) + w_k+ \frac{\delta}{2} + \frac{\delta}{2} \qquad \forall k \in  \Zp .
\end{align*}
It follows from the definitions of $y_k$ and $w_k$, the fact that $y_0 = V_0$, and the second inequality of \eqref{eq:e08} that
\begin{align*}
& \ul\alpha (\abs{x (k)}) \leq \beta (\ol\alpha(|x(0)|), kT) + 2 \int_{0}^{kT} \hat\gamma(\abs{w(s)}) \mathrm{d}s + \delta  & \end{align*}
for all $k \in \Zp$.
This completes the proof.
\end{proof}
Next, in the following theorem, we show SP-iIiSS of the exact discrete-time models \eqref{eq:family-exact}.
\begin{theorem} \label{thm:iIiSS}
Assume that the following hold
\begin{enumerate}[i)]
\item\label{item:iIiSS-app} The family of the approximate discrete-time models \eqref{eq:family-app} is Lyapunov-SP-iIiSS;
\item\label{item:iIiSS-one-step} $F_{T,h}^a$ is one-step consistent with $F_T^e$;
\item\label{item:iIiSS-lips} $u_{T,h}$ is uniformly locally Lipschitz.
\end{enumerate}
Then the family of exact discrete-time models \eqref{eq:family-exact} is SP-iIiSS.
\end{theorem}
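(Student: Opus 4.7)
The plan is to follow the architecture of the proof of Theorem~\ref{thm:iISS} but simplify the tail argument to exploit the stronger hypothesis $\alpha\in\Kinf$ instead of $\alpha\in\PD$.

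First, given $(\Delta_x,\Delta_w,\delta)$ from the SP-iIiSS definition, I would instantiate the constants $\Delta_1,\Delta_2=\Delta_w,\Delta_3,\delta_1$ and then select $T^*$ and $h^*$ using the same cascade of conditions that appears in the proof of Theorem~\ref{thm:iISS}. The critical a-priori choice is the radius $\Delta_1$: since $\alpha\in\Kinf$, the number $r^{\star}:=\alpha^{-1}(\hat\gamma(\Delta_w)+2\delta_1)$ is finite, and I take $\Delta_1$ so that $\ul\alpha(\Delta_1)$ strictly exceeds $\max\{\ol\alpha(\Delta_x),\ol\alpha(r^{\star})\}$ plus a safety margin. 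A standard ISS-style level-set argument will then show that $V_k$ remains in $[0,\ol\alpha(\Delta_1)]$ for all $k\in\Zp$; hence $|x(k)|\le\Delta_1$ throughout the trajectory, and one may fix a finite $\Delta_3$ large enough for~\eqref{eq:e09} to be invocable at every step, even though the SP-iIiSS definition only requires $w\in\mathcal{L}_\infty(\Delta_w)$.

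Next, I would replay the inequalities developed in the proof of Theorem~\ref{thm:iISS}: the Lyapunov-SP-iIiSS bound on $V^a_{k+1}-V_k$, the single-rate versus dual-rate splitting of $\mathcal{F}^e_{T,h}-\mathcal{F}^a_{T,h}$, the Gronwall-type bound on $|x(k)-x_c(k)|$ supplied by Lemma~\ref{L:02} together with item~\ref{item:iIiSS-lips}), the one-step consistency from item~\ref{item:iIiSS-one-step}), and the Lipschitz constant $M$ from~\eqref{eq:0909}. Choosing $T^*$ and $h^*$ small enough to absorb the resulting error terms into $\delta_1$ produces
\begin{align*}
V^e_{k+1}-V_k \le T\left[-\alpha(|x(k)|) + \tfrac{1}{T}\int_{kT}^{(k+1)T}\hat\gamma(|w(s)|)\,\mathrm{d}s + 2\delta_1\right].
\end{align*}
Because $\alpha\in\Kinf$, the PD-gain decomposition $\alpha\ge\tilde\rho_1\tilde\rho_2$ and the subsequent invocation of \cite[Lemma~3]{na02} used in the proof of Theorem~\ref{thm:iISS} are no longer needed. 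Instead I simply telescope from $i=0$ to $k-1$, yielding
\begin{align*}
T\sum_{i=0}^{k-1}\alpha(|x(i)|) \le V_0 - V^e_k + \int_0^{kT}\hat\gamma(|w(s)|)\,\mathrm{d}s + 2Tk\delta_1.
\end{align*}
Dropping $-V^e_k\le 0$, using $V_0\le\ol\alpha(|x(0)|)$ from~\eqref{eq:e08}, and arranging $\delta_1\le\delta/2$ at the outset deliver the SP-iIiSS estimate with $\chi=\ol\alpha$ and $\gamma=\hat\gamma$.

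The main obstacle is the a-priori invariance of $\{|x|\le\Delta_1\}$ under the exact dual-rate dynamics: in Theorem~\ref{thm:iISS} this was secured using the PD-comparison machinery and an explicit bound involving $\Delta_3$, whereas here the SP-iIiSS hypothesis places no constraint on $\int\hat\gamma(|w|)$, so the trajectory bound must instead be produced from the $\mathcal{L}_\infty$ bound on $w$ alone. The $\Kinf$ property of $\alpha$ is what resolves this: it creates an ISS-type sublevel set of $V_{T,h}$ outside of which the one-step decrease strictly dominates the perturbation $T(\hat\gamma(\Delta_w)+2\delta_1)$, and matching $\Delta_3$ to that sublevel set closes the induction. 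Once this boundedness bookkeeping is settled, the remaining steps are essentially mechanical and mirror Theorem~\ref{thm:iISS}.
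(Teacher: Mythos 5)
Your proposal is correct and follows essentially the same route as the paper: the paper packages the exact-model descent inequality into Lemma~\ref{L:03} (reconciling $\mathcal{L}_\infty(\Delta_w)$ with $\mathcal{L}_{\hat\gamma}(\Delta_3)$ exactly as you indicate, by noting that over one sampling interval of length $T<1$ the $\hat\gamma$-integral is bounded by $\hat\gamma(\Delta_w)$), and then runs the same $\Kinf$-based sublevel-set invariance followed by telescoping. The only cosmetic difference is that the paper telescopes $\alpha\circ\ol\alpha^{-1}(V_{T,h}(x(i)))$ rather than $\alpha(|x(i)|)$, so its final estimate carries the comparison function $\alpha\circ\ol\alpha^{-1}\circ\ul\alpha$ instead of $\alpha$.
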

\begin{proof}
Take any positive real numbers $(\tilde\Delta_x,\tilde\Delta_w,\tilde\delta)$.
Denote $\Delta_x := \ul\alpha^{-1} \big( \ol\alpha \circ \alpha^{-1} (\tilde\Delta_w + \tilde\delta) + 1\big)$, $\Delta_w := \max \{ \tilde\Delta_w , \hat\gamma^{-1} \circ \alpha (\tilde\Delta_x)\}$ and $\delta := \tilde\delta$.
Let the data $(\Delta_{x},\Delta_w,\delta)$ generate $T^*_1$, $h^*_1$ and the family of Lyapunov functions $V_{T,h}$ coming from Lemma~\ref{L:03} (see~\ref{app:technical-lemmas}).
Take $T^*_2$ such that $T^*_2 [\hat\gamma(\Delta_w) + \delta] < 1$.
Let $T^* := \min \{T^*_1 , T^*_2 \}$, $h^* := \min \{h^*_1,T^*\}$.
It follows from the second inequality of (\ref{eq:e08}) and the monotonicity of $\alpha$ that
\begin{align}
V_{T,h}(\mathcal{F}^e_{T,h} (x (k),w_T[k])) - V_{T,h}(x(k)) \leq T & \bigg[ - \alpha \circ \overline\alpha^{-1} (V_{T,h} (x(k))) & \nonumber\\
& \; + \frac{1}{T} \int_{kT}^{(k + 1)T} \hat\gamma(\abs{w(s)}) \mathrm{d}s + \delta \bigg] & \label{eq:e29}
\end{align}
for all $h \in (0,h^*)$ associated with each fixed $T \in (0,T^*)$, all $\abs{x(k)} \leq \Delta_{x}$ and all $w \in \mathcal{L}_\infty (\Delta_w)$.
Take an arbitrary $x(0)$ with $\abs{x(0)} \leq \tilde\Delta_x$.
It follows from the definition of $\Delta_w$ and the monotonicity of $\alpha$ that $V_{T,h} (x(0)) \leq \overline\alpha (\tilde\Delta_x) \leq \ol\alpha \circ \alpha^{-1} (\hat\gamma (\Delta_w) +\delta)$.
Now we show that for any $x(0)$ with $V_{T,h} (x(0)) \leq \ol\alpha \circ \alpha^{-1} (\hat\gamma (\Delta_w) +\delta)$, we have $\abs{x(k)} < \Delta_x$ for all $k \in \Zp$.
To see this, note that from~\eqref{eq:e29}, the fact that $w \in \mathcal{L}_\infty (\Delta_w)$ and the choice of $T^*_2$, we have
\begin{align*}
V_{T,h} (x(1)) & \leq V_{T,h} (x(0)) + T (\Delta_w + \delta)  < \ol\alpha \circ \alpha^{-1} (\hat\gamma (\Delta_w) +\delta) + 1 .
\end{align*}
It follows from the first inequality of (\ref{eq:e08}) and the definition of $\Delta_x$ that $\abs{x(1)} < \Delta_x$.
Then $x(1)$ is such that either $V_{T,h} (x(1)) \leq \ol\alpha \circ \alpha^{-1} (\hat\gamma (\Delta_w) +\delta)$ or $\ol\alpha \circ \alpha^{-1} (\hat\gamma (\Delta_w) +\delta) \leq V_{T,h} (x(1)) < \ol\alpha \circ \alpha^{-1} (\hat\gamma (\Delta_w) +\delta) + 1$.
In the former case, with the same arguments as above we see $V_{T,h} (x(2)) < \ol\alpha \circ \alpha^{-1} (\hat\gamma (\Delta_w) +\delta) + 1$.
In the latter case, it follows from~\eqref{eq:e29}, the fact that $w \in \mathcal{L}_\infty (\Delta_w)$ and the choice of $T^*_2$ that
\begin{align*}
V_{T,h} (x(2)) & \leq V_{T,h} (x(1)) - T \alpha \circ \overline\alpha^{-1} \circ \ol\alpha \circ \alpha^{-1} (\hat\gamma (\Delta_w) +\delta) + T (\hat\gamma(\Delta_w) + \delta)  & \\
& < \ol\alpha \circ \alpha^{-1} (\hat\gamma (\Delta_w) +\delta) + 1 .
\end{align*}
Repeating the same procedure yields $V_{T,h} (x(k))$ $<$ $\ol\alpha \circ \alpha^{-1} (\hat\gamma (\Delta_w) +\delta) + 1$ for all $k \in \Zp$.
This together with the first inequality of (\ref{eq:e08}) and the definition of $\Delta_x$ gives $\abs{x(k)} < \Delta_x$ for all $k \in \Zp$.
Therefore, (\ref{eq:e29}) holds for all $k \in \Zp$. 

Taking a sum of (\ref{eq:e29}) from $0$ to $k$ gives
\begin{align*}
 V_{T,h}(x (k)) - V_{T,h}(x(0)) & = \sum_{i=0}^{k-1} \Big[ V_{T,h} (x (i+1)) - V_{T,h} (x(i)) \Big] \nonumber\\
& \leq \! - T \sum_{i=0}^{k-1} \! \alpha \circ \ol\alpha^{-1} (V_{T,h}(x(i)))  \!+\! \int_{0}^{k T} \!\!\!\!\!\!\hat\gamma(\abs{w(s)}) \mathrm{d}s \!+ \!T k \delta   . &
\end{align*}
It follows from (\ref{eq:e08}) that
\begin{align*}
T \sum_{i=0}^{k-1} \alpha \circ \overline\alpha^{-1} \circ \ul\alpha(\abs{x(i)}) & \leq T \sum_{i=0}^{k-1} \alpha \circ \ol\alpha^{-1} (V_{T,h}(x(i))) & \nonumber\\
& \leq \ol\alpha(\abs{x(0)}) + \int_{0}^{k T} \hat\gamma(\abs{w(s)}) \mathrm{d}s + T k \delta . &
\end{align*}
This complete the proof.
\end{proof}

\begin{remark}
Note that all of our results are valid if we set $h = T$. This case not only because of its simplicity but also because several approximation methods (such as the classic Euler approximation) preserve the structure and types of nonlinearities of the continuous-time system and, hence, may be preferable to the designer. It should be noted that our results also hold for this special case. $\hfill \Box$
\end{remark}

\begin{remark}
Throughout the paper we the full information case in which all the state variables of the plant are measurable. This restriction can be relaxed using an output-based dynamic controller along the lines in~\cite{Ustunturk2012}. In this case, however, the results can be proved at the expense of possibly more restrictive assumptions including a uniform global Lipschitz condition on $F_T^e$.  $\hfill \Box$
\end{remark}

\begin{remark}
Here are two points which can be observed from the proofs of our results:
\begin{itemize}
\item Because of dealing with a multi-rate scheme, more complicated derivation and analysis are required in comparison with those in~\cite{na02}.
In particular, the mismatch between the plant state $x$ and the estimator state $x_c$ produces an additional term in equation~\eqref{eq:e22}, compared with its single-rate counterpart in proof of Theorem 1 in~\cite{na02}.
This term, in turn, leads to nontrivial difficulties in the following equations and analysis of the system.
\item Based on the definition of $T^*$ given in the proofs of our results, there is a trade-off between the values of $\ell$ and $T^*$.
In other words, the slower sampler (the larger $\ell$) is, the smaller maximum value should be chosen for $T$ to guarantee the properties of the system. $\hfill \Box$
\end{itemize}
\end{remark}

\section{Example} \label{S:Examples}
Consider the following continuous-time plant
\begin{equation}\label{ex:main}
	\begin{array}{l}
		\dot{x}_1 = -\frac{x_1}{1+x_1^2} + x_1 x_2, \\
		\dot{x}_2 = u + w ,
	\end{array}
\end{equation}
where $x=(x_1,x_2)$ is the state, $u$ is the control input and $w$ is the disturbance input that is given by 
	\begin{align}\label{ex:disturbance}
		w (t) = \left\{ \begin{array}{rcl}
			1 & & 2k \leq t < (2k+1) , \\
			-1 & & 2k+1 \leq t < 2(k+1) , \\
			0 & &  t \geq 6 ,
		\end{array}\right.
	\end{align}
for $k=0,1,2$.

Assume that the plant is located between a sampler and a ZOH device.
Given the case $h = T$, we use the following approximate model for controller design~\cite{nl02,Grune.2001}
\begin{align}\label{ex:plant-app-mod}
	\begin{array}{l}
		x_1 (k+1) = x_1(k) + T \left(-\frac{x_1(k)}{1+x_1(k)^2} + x_1(k) x_2(k) \right), \\
		x_2 (k+1)= x_2(k) + T u(k) + \int_{kT}^{(k+1)T} w(s) \mathrm{d}s . 
	\end{array}
\end{align}
The above model is one-step consistent with the corresponding exact model.
Following the Lyapunov-based controller redesign for sampled-data systems developed in \cite{Nesic.2005}, the control law is given by 
$u = u_{ct} + T u_1 , $
where $u_{ct}$ denotes a continuous-time controller designed based on the continuous-time model.
Moreover, the correction term $u_1$ is designed based on the approximate discrete-time model such that some better performance is achieved. In that way, we have
\begin{equation}\label{claww}
	u_{T,h}(x) = \frac{2x_1^2}{1+x_1^2} - c_1 x_2 - T c_2 x_2 x_1^2 ,
\end{equation}
where $c_1,c_2 >0$.
Note that the control law \eqref{claww} is uniformly locally Lipschitz. Let 
$$V (x_1,x_2) = 
\ln (1+x_1^2)+\frac{1}{2} x_2^2$$ 
be the SP-iISS Lyapunov candidate for the approximate model of the single-rate sampled-data system~\eqref{ex:plant-app-mod} and~\eqref{claww}. 
It is straightforward to see that for any positive real numbers $(\Delta_1,\Delta_2,\Delta_3,\delta)$, there exists $T^* > 0$ such that 
for all $T \in (0,T^*)$ and all $x \in \R^{n_x}$ with $|x| \leq \Delta_1$ the following holds
\begin{align*}
	V (x(k+1)) - V (x(k)) \leq T  \bigg[ & - \frac{2x_1^2}{1+x_1^2} - (c_1 - 1/2) x_2^2 - T c_2 x_2^2 x_1^2  +  \frac{1}{T} \int_{kT}^{(k+1)T} \!\!\!\!\!\!\!\!\abs{w(s)} \mathrm{d}s  + \delta_1 \bigg]  & 
\end{align*}
with $w(\cdot)$ defined by~\eqref{ex:disturbance}.
To evaluate the efficiency, both single-rate, which is developed in \cite{na02}, and multi-rate schemes are implemented for the above system and the results are compared.
The parameters in the control law~\eqref{claww} are set as $c_1 =6$ and $c_2=1$.
Simulations are carried out for different values of sampling times.
Note that for the single-rate scheme, the ZOH and sampler sampling times are set equal to $T_s$.
On the other hand, for the multi-rate scheme, these values are non-equal and denoted by $T$ and $T_s$, respectively.
Following the arguments in~\cite[Example III.1]{nl02}, in each set of simulations, an estimated region of initial conditions is obtained (\textit{i.e} $|x_0| \leq R_T$), for which the state $x$ stays bounded.
This region is denoted as region of boundedness (ROB).
The results are given in Table \ref{tablee}.
As it is clear, generally, for the same value of sampling time, the multi-rate systems guarantee a lager ROB, compared to the single-rate system.
In particular, by lowering the ZOH sampling time in multi-rate scheme, one can guarantee a larger ROB for the closed-loop system.
Indeed, raising the sampling period from $0.3$ to $0.34$ would render the single-rate case unstable, while this margin for both the multi-rate cases is $0.38$.
\begin{center}
\begin{table}[h]
	\centering
			\vspace{-0.3cm}
	\caption{\small Radius of ROB (\textit{i.e.} $R_T$) for different values of sampler ($T_s$) and ZOH ($T$) sampling times.}
	\resizebox{9cm}{!}{
		\begin{tabular}{lccccc}
			\hline
			& \multicolumn{5}{c}{$T_s$} \\ \cdashline{2-6} 
			&  0.1 & 0.2 & 0.3 & 0.34 & 0.38  \\ \cline{1-6} 
			SR Scheme & 24 & 11.5 & 7.1 & 0 & 0 \\    
			MR Scheme ($T=0.05$)  & 27.5 & 13.4 & 8.5 & 7   & 0   \\
			MR Scheme ($T=0.01$)  & 30.2 & 14.7 & 9.4 & 8.2 & 0   \\
			\hline \hline
			\end{tabular}}
		\vspace{-0.9cm}
			\label{tablee}
\end{table}
\end{center}
To compare the results of systems from a performance point of view, state trajectories are obtained for all three multi-rate and single-rate cases given in Table~\ref{tablee}, with $T_s=0.3 \ s$, and identical initial conditions $x_0 = [1.2,-5.9]^\top$. The state response is depicted in Figure~\ref{fig:multirate}, where the trajectories of the both multi-rate schemes look quite similar.
The single-rate scheme, however, deteriorates the state performance and leads to a larger overshoot and longer settling time. 
\begin{figure}[t]
	\begin{center}
			\includegraphics[width=14cm,height=9cm]{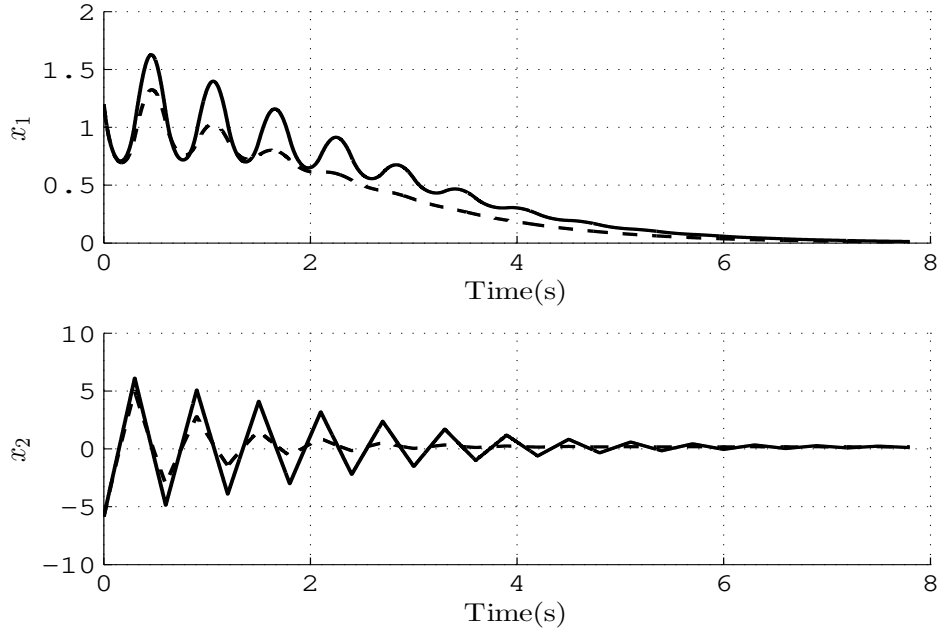}
			\caption{State response of the plant, controlled by two different multi-rate schemes (dashed) and single-rate scheme (solid).}
			\label{fig:multirate}
		\end{center}
		\vspace{-0.8cm}
\end{figure}
	
\section{Conclusions}\label{S:Conclusions}

In this paper, we investigated the properties of SP-iISS and SP-iIiSS for dual-rate nonlinear sampled-data systems.
In particular, we extended single-rate results in~\cite{na02} to the case of the low measurement rate constraint.
Towards this end, we compensated the lack of measurements using an estimator approximately reconstructing the current state.
Then we provided sufficient conditions under which a controller that semiglobally practically integral input-to-(integral-)state stabilizes the family of approximate discrete-time models of the single-rate nonlinear sampled-data system, also stabilizes the family of exact discrete-time models of the nonlinear sampled-data system in the same sense implemented in the dual-rate setting.
As a verification of our results, numerical simulations were given.

Our results can be further extended to accommodate output feedback control problem, transmission delays and message dropouts; e.g., see~\cite{Ustunturk2012,vnh12,pll08} for more details.


\begin{thebibliography}{10}
\providecommand{\url}[1]{#1}
\csname url@samestyle\endcsname
\providecommand{\newblock}{\relax}
\providecommand{\bibinfo}[2]{#2}
\providecommand{\BIBentrySTDinterwordspacing}{\spaceskip=0pt\relax}
\providecommand{\BIBentryALTinterwordstretchfactor}{4}
\providecommand{\BIBentryALTinterwordspacing}{\spaceskip=\fontdimen2\font plus
\BIBentryALTinterwordstretchfactor\fontdimen3\font minus
  \fontdimen4\font\relax}
\providecommand{\BIBforeignlanguage}[2]{{%
\expandafter\ifx\csname l@#1\endcsname\relax
\typeout{** WARNING: IEEEtran.bst: No hyphenation pattern has been}%
\typeout{** loaded for the language `#1'. Using the pattern for}%
\typeout{** the default language instead.}%
\else
\language=\csname l@#1\endcsname
\fi
#2}}
\providecommand{\BIBdecl}{\relax}
\BIBdecl

\bibitem{Tanasa2016}
V.~Tanasa, S.~Monaco, and D.~Normand-Cyrot, ``Backstepping control under
  multi-rate sampling,'' \emph{IEEE Trans. Autom. Control}, vol.~61, no.~5, pp.
  1208--1222, 2016.

\bibitem{nkk15}
N.~Noroozi, A.~Khayatian, and H.~R. Karimi, ``Semiglobal practical integral
  input-to-state stability for a family of parameterized discrete-time
  interconnected systems with application to sampled-data control systems,''
  \emph{Nonlinear Anal. Hybrid Syst.}, vol.~17, pp. 10--24, 2015.

\bibitem{Ustunturk2012}
A.~{\"U}st{\"u}nt{\"u}rk, ``Output feedback stabilization of nonlinear
  dual-rate sampled-data systems via an approximate discrete-time model,''
  \emph{Automatica}, vol.~48, no.~8, pp. 1796--1802, 2012.

\bibitem{Monaco2007}
S.~Monaco, D.~Normand-Cyrot, and C.~Califano, ``From chronological calculus to
  exponential representations of continuous and discrete-time dynamics: {A}
  {L}ie-algebraic approach,'' \emph{IEEE Trans. Autom. Control}, vol.~52,
  no.~12, pp. 2227--2241, 2007.

\bibitem{lna06}
D.~S. Laila, D.~Ne{\v{s}}i{\'c}, and A.~Astolfi, ``Sampled-data control of
  nonlinear systems,'' in \emph{Advanced topics in control systems
  theory}.\hskip 1em plus 0.5em minus 0.4em\relax Springer, 2006, pp. 91--137.

\bibitem{Owens.1990}
D.~H. Owens, Y.~Zheng, and S.~A. Billings, ``Fast sampling and stability of
  nonlinear sampled-data systems: Part 1. existence theorems,'' \emph{IMA J.
  Math. Control I.}, vol.~7, no.~1, pp. 1--11, 1990.

\bibitem{Castillo.1997}
B.~Castillo, S.~D. Gennaro, S.~Monaco, and D.~Normand-Cyrot, ``On regulation
  under sampling,'' \emph{IEEE Trans. Autom. Control}, vol.~42, no.~6, pp.
  864--868, 1997.

\bibitem{Laila.2002}
D.~S. Laila, D.~Ne\v{s}i\'{c}, and A.~R. Teel, ``Open- and closed-loop
  dissipation inequalities under sampling and controller emulation,''
  \emph{Eur. J. Control}, vol.~8, no.~2, pp. 109--125, 2002.

\bibitem{Dochain.1984}
D.~Dochain and G.~Bastin, ``Adaptive identification and control algorithms for
  nonlinear bacterial growth systems,'' \emph{Automatica}, vol.~20, no.~5, pp.
  621--634, 1984.

\bibitem{Mareels.1992}
I.~M. Mareels, H.~Penfold, and R.~Evans, ``Controlling nonlinear time-varying
  systems via {E}uler approximations,'' \emph{Automatica}, vol.~28, no.~4, pp.
  681 -- 696, 1992.

\bibitem{Nesic.2006}
D.~Ne\v{s}i\'{c} and A.~Teel, ``Stabilization of sampled-data nonlinear systems
  via backstepping on their {E}uler approximate model,'' \emph{Automatica},
  vol.~42, no.~10, pp. 1801--1808, 2006.

\bibitem{ntk99}
D.~Ne\v{s}i\'{c}, A.~R. Teel, and P.~Kokotovi\'{c}, ``Sufficient conditions for
  stabilization of sampled-data nonlinear systems via discrete-time
  approximations,'' \emph{Sys. Control Lett.}, vol.~38, no.~4, pp. 259--270,
  1999.

\bibitem{nl02}
D.~Ne\v{s}i\'{c} and D.~Laila, ``A note on input-to-state stabilization for
  nonlinear sampled-data systems,'' \emph{IEEE Trans. Autom. Control}, vol.~47,
  no.~7, pp. 1153--1158, 2002.

\bibitem{nt04}
D.~Ne\v{s}i\'{c} and A.~R. Teel, ``A framework for stabilization of nonlinear
  sampled-data systems based on their approximate discrete-time models,''
  \emph{IEEE Trans. Autom. Control}, vol.~49, no.~7, pp. 1103--1122, 2004.

\bibitem{vnh12}
N.~van~de Wouw, D.~Ne\v{s}i\'{c}, and W.~P. M.~H. Heemels, ``A discrete-time
  framework for stability analysis of nonlinear networked control systems,''
  \emph{Automatica}, vol.~48, no.~6, pp. 1144--1153, 2012.

\bibitem{pm04}
I.~G. Polushin and H.~J. Marquez, ``Multirate versions of sampled-data
  stabilization of nonlinear systems,'' \emph{Automatica}, vol.~40, no.~6, pp.
  1035--1041, 2004.

\bibitem{lml08}
X.~Liu, H.~J. Marquez, and Y.~Lin, ``Input-to-state stabilization for nonlinear
  dual-rate sampled-data systems via approximate discrete-time model,''
  \emph{Automatica}, vol.~44, no.~12, pp. 3157--3161, 2008.

\bibitem{Sontag.1989}
E.~D. Sontag, ``Smooth stabilization implies coprime factorization,''
  \emph{IEEE Trans. Autom. Control}, vol.~34, no.~4, pp. 435--443, 1989.

\bibitem{bmr15}
H.~Beikzadeh and H.~J. Marquez, ``Multirate output feedback control of
  nonlinear networked control systems,'' \emph{IEEE Trans. Autom. Control},
  vol.~60, no.~7, pp. 1939--1944, 2015.

\bibitem{asw00}
D.~Angeli, E.~Sontag, and Y.~Wang, ``A characterization of integral
  input-to-state stability,'' \emph{IEEE Trans. Autom. Control}, vol.~45,
  no.~6, pp. 1082--1097, 2000.

\bibitem{na02}
D.~Ne\v{s}i\'{c} and D.~Angeli, ``Integral versions of {ISS} for sampled-data
  nonlinear systems via their approximate discrete-time models,'' \emph{IEEE
  Trans. Autom. Control}, vol.~47, no.~12, pp. 2033--2037, 2002.

\bibitem{Arcak.2004}
M.~Arcak and D.~D. Ne\v{s}i\'{c}, ``A framework for nonlinear sampled-data
  observer design via approximate discrete-time models and emulation,''
  \emph{Automatica}, vol.~40, no.~11, pp. 1931--1938, 2004.

\bibitem{Angeli.2000}
D.~Angeli, E.~D. Sontag, and Y.~Wang, ``Further equivalences and semiglobal
  versions of integral input to state stability,'' \emph{Dynam. Control},
  vol.~10, no.~2, pp. 127--149, 2000.

\bibitem{Khalil.2002}
H.~K. Khalil, \emph{Nonlinear systems}, 2nd~ed.\hskip 1em plus 0.5em minus
  0.4em\relax Upper Saddle River (New Jersey): Prentice Hall, 2002.

\bibitem{Grune.2001}
L.~Gr{\"u}ne and P.~Kloeden, ``Higher order numerical schemes for affinely
  controlled nonlinear systems,'' \emph{Numerische Mathematik}, vol.~89, no.~4,
  pp. 669--690, 2001.

\bibitem{Nesic.2005}
D.~Ne\v{s}i\'{c} and L.~Gr{\"u}ne, ``{L}yapunov-based continuous-time nonlinear
  controller redesign for sampled-data implementation,'' \emph{Automatica},
  vol.~41, no.~7, pp. 1143--1156, 2005.

\bibitem{pll08}
I.~G. Polushin, P.~X. Liu, and C.-H. Lung, ``On the model-based approach to
  nonlinear networked control systems,'' \emph{Automatica}, vol.~44, no.~9, pp.
  2409--2414, 2008.

\end{thebibliography}

\appendices

\section{Technical Lemmas} \label{app:technical-lemmas}
\begin{lemma} \label{L:02}
Let any positive real numbers $(\Delta_x,\Delta_w,\varepsilon)$ be given. There exist $T^* > 0$ and $L > 0$ such that for any fixed $T \in (0,T^*)$ there exists $h^* \in (0,T]$ such that for all $h \in (0,h^*)$, $\abs{x(0)} \leq \Delta_x$ and $w \in \mathcal{L}_\infty (\Delta_w)$, the following holds
\begin{align}\nonumber
& \abs{x(k) - x_c (k)} \leq T \varepsilon + L \sum_{i=0}^{\ell-2} e^{L(i+1)T} \int_{(k-i - 1)T}^{(k-i)T}\abs{w(s)} \mathrm{d}s &
\end{align}
whenever $\max_{i \in \{ 0 , 1 , \dots , k \}} \abs{x(i)} \leq \Delta_x$ for some $k \in \Zp$.
\end{lemma}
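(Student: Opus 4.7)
The plan is to iterate a per-step error bound, starting from the fact that at every sampling instant $k_0 := \ell \lfloor k/\ell \rfloor$ we have $x_c(k_0) = x(k_0)$, so the error is zero there and only accumulates over at most $\ell-1$ approximate-model steps before the next reset. Writing $j := k - k_0 \in \{0, 1, \dots, \ell-1\}$, the strategy is to control $|x(k_0+j+1) - x_c(k_0+j+1)|$ in terms of $|x(k_0+j) - x_c(k_0+j)|$ and then telescope down to zero at $k_0$.

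First, I would fix $\Delta_x' := \Delta_x + 1$ and use item~\ref{item:iISS-lips}) (uniform local Lipschitz of $u_{T,h}$) to extract a bound $\Delta_u > 0$ on $|u_{T,h}(x)|$ for $|x| \leq \Delta_x'$, valid for sufficiently small $T,h$. Local Lipschitz continuity of $f$ then produces a single constant $L > 0$ serving as Lipschitz constant in $x$, $u$, and $w$ on the compact set $\{|x| \leq \Delta_x',\,|u| \leq \Delta_u,\,|w| \leq \Delta_w\}$. This $L$ is the constant in the lemma's statement.

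Next, I would derive the per-step recursion by splitting
\begin{align*}
x(k_0+j+1) - x_c(k_0+j+1) =\ & \bigl[F_T^e(x(k_0+j),u,w_T) - F_T^e(x_c(k_0+j),u,w_T)\bigr] \\
& + \bigl[F_T^e(x_c(k_0+j),u,w_T) - F_T^e(x_c(k_0+j),u,0)\bigr] \\
& + \bigl[F_T^e(x_c(k_0+j),u,0) - F_{T,h}^a(x_c(k_0+j),u,0)\bigr].
\end{align*}
The first bracket is bounded by $e^{LT}\,|x(k_0+j)-x_c(k_0+j)|$ by Gronwall applied to two exact trajectories sharing the same $u$ and $w$; the second is bounded by $L e^{LT} \int_{(k_0+j)T}^{(k_0+j+1)T}|w(s)|\,\mathrm{d}s$ by another Gronwall step using Lipschitz dependence on $w$; the third is bounded by $T\rho(h)$ via item~\ref{item:iISS-one-step}). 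Iterating from $i=0$ to $j-1$ and re-indexing $m = j-1-i$ yields
\begin{align*}
|x(k)-x_c(k)| \leq \sum_{i=0}^{j-1} e^{LiT}\,T\rho(h) + L\sum_{i=0}^{j-1} e^{L(i+1)T}\!\!\int_{(k-i-1)T}^{(k-i)T}\!\!|w(s)|\,\mathrm{d}s.
\end{align*}
Since $j \leq \ell-1$, the first sum is dominated by $(\ell-1)e^{L(\ell-1)T}T\rho(h)$; choosing $T^*$ small enough that $e^{L(\ell-1)T^*}$ is controlled, then $h^* \in (0,T^*]$ small enough that $(\ell-1)e^{L(\ell-1)T^*}\rho(h^*) \leq \varepsilon$ collapses this into $T\varepsilon$. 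The second sum is majorized by extending indices to $\ell-2$ (the added terms are nonnegative), giving exactly the bound in the lemma.

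The main obstacle is the mild circularity in using Lipschitz constants that require $x_c$ to remain in the ball of radius $\Delta_x'$: the per-step bound itself is needed to control $|x_c - x|$ and hence $|x_c|$. I would resolve this by a short induction on $j$ within the interval $[k_0T,(k_0+\ell)T]$, shrinking $T^*$ and $h^*$ if necessary so that the worst-case cumulative error over $\ell-1$ steps (disturbance part plus the chosen $T\varepsilon$ part) stays below $1$, which keeps $|x_c(k_0+j)| \leq \Delta_x'$ throughout and validates the uniform Lipschitz constants used in the recursion.
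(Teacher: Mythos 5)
Your argument is correct and is essentially the proof the paper omits by deferring to Claim 1 of \cite{lml08}: exploit the reset $x_c(i\ell)=x(i\ell)$, split the one-step error into a Gronwall term in the initial-condition mismatch, a Gronwall term in the disturbance, and a one-step-consistency term, then iterate over the at most $\ell-1$ steps between measurements and absorb the accumulated $\rho(h)$ contribution into $T\varepsilon$. The induction you add to keep $x_c$ in the ball of radius $\Delta_x+1$ (so that the uniform Lipschitz and consistency bounds apply) is exactly the right way to close the apparent circularity, so no gap remains.
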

\begin{proof}
The proof follows the same arguments as those in the proof of \cite[Claim 1]{lml08} and, hence, is omitted. 
\end{proof}
\begin{lemma} \label{L:03}
Let items~$\mathrm{\ref{item:iIiSS-app})}$ to~$\mathrm{\ref{item:iIiSS-lips})}$ of Theorem~\ref{thm:iIiSS} hold. Given any positive real numbers $(\Delta_x,\Delta_w,\delta)$, there exists $T^* > 0$ such that for each fixed $T \in (0,T^*)$ there exists some $h^* \in (0,T]$ such that
\begin{align}
& V_{T,h} (\mathcal{F}^e_{T,h} (x(k),w_T[k])) - V_{T,h}(x(k)) \leq T\! \left[ - \alpha(|x(k)|) \!+\! \frac{1}{T} \int_{kT}^{(k + 1)T} \!\!\!\!\!\!\!\!\!\!\! \hat\gamma(\abs{w(s)}) \mathrm{d}s \!+\! \delta \right]  & \label{eq:e23}
\end{align}
for all $h \in (0,h^*)$, all $\abs{x(k)} \leq \Delta_x$ and all $w \in \mathcal{L}_\infty (\Delta_w)$.
\end{lemma}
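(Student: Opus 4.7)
The plan is to import the Lyapunov-decrease bookkeeping from the proof of Theorem~\ref{thm:iISS}, streamlined by the fact that we only need a pointwise estimate at a single admissible $x$ (not a closed trajectory bound). Decompose
\begin{align*}
V_{T,h}(\mathcal{F}^e_{T,h}(x,w_T))-V_{T,h}(x) = & \bigl[V_{T,h}(\mathcal{F}^a_{T,h}(x,w_T))-V_{T,h}(x)\bigr] \\
 & + \bigl[V_{T,h}(\mathcal{F}^e_{T,h}(x,w_T))-V_{T,h}(\mathcal{F}^a_{T,h}(x,w_T))\bigr].
\end{align*}
The first bracket is the Lyapunov increment for the \emph{single-rate approximate} family and is controlled by~\eqref{eq:e09}; the second, the difference between the dual-rate exact and single-rate approximate one-step maps, is controlled by~\eqref{eq:0909}. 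The target is to make the mismatch bracket as small as $\tfrac{2}{3}T\delta$ and to select the budget inside~\eqref{eq:e09} as $\delta_1:=\delta/3$, so that the two contributions add up to at most $T\delta$.

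For parameter selection, fix $\Delta_1$ slightly larger than $\Delta_x$, set $\Delta_2:=\Delta_w$, pick any $\Delta_3>0$, and $\delta_1:=\delta/3$, then invoke Definition~\ref{def:lyapunov-sp-iiss} to produce $M$ and the family $V_{T,h}$. Item~\ref{item:iIiSS-lips}) on the $\Delta_1$-ball delivers the uniform Lipschitz constant $\tilde L$ for $u_{T,h}$ together with a uniform bound $\Delta_u$ on $|u_{T,h}|$; feeding $(\Delta_1,\Delta_u,\Delta_w)$ into item~\ref{item:iIiSS-one-step}) supplies the one-step consistency rate $\rho$; and Lemma~\ref{L:02} with $(\Delta_1,\Delta_w,\varepsilon_1)$, for a small $\varepsilon_1>0$ chosen last, yields $L$.

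The mismatch bracket is handled exactly as in equations~\eqref{eq:e22}--\eqref{eq:e21} of the proof of Theorem~\ref{thm:iISS}: insert the exact single-rate one-step map $\tilde{\mathcal{F}}^e_{T,h}$ of~\eqref{eq:exact-model} driven by~\eqref{cont-single-rate} between $\mathcal{F}^e_{T,h}$ and $\mathcal{F}^a_{T,h}$; bound $|\mathcal{F}^e_{T,h}-\tilde{\mathcal{F}}^e_{T,h}|$ by Gronwall and item~\ref{item:iIiSS-lips}) to get $\tilde L(e^{LT}-1)|x(k)-x_c(k)|$ as in~\eqref{eq:fe-fes}; bound $|\tilde{\mathcal{F}}^e_{T,h}-\mathcal{F}^a_{T,h}|\le T\rho(h)$ by one-step consistency; and bound $|x(k)-x_c(k)|$ by Lemma~\ref{L:02} together with $w\in\mathcal{L}_\infty(\Delta_w)$. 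Because $\tilde L(e^{LT}-1)=O(T)$, the total mismatch is bounded by
$$T\bigl[\rho(h)+\tilde L(e^{LT}-1)\bigl(\varepsilon_1+Le^{L(\ell-1)T}(\ell-1)\Delta_w\bigr)\bigr],$$
which can be made no larger than $2T\delta/(3M)$ by choosing $\varepsilon_1$ first, then $T^*$ and $h^*$ small enough.

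The main obstacle is the a priori containment needed to legitimize~\eqref{eq:0909}: one has to verify that \emph{both} $\mathcal{F}^a_{T,h}(x,w_T)$ and $\mathcal{F}^e_{T,h}(x,w_T)$ lie in the $\Delta_1$-ball where $V_{T,h}$ is $M$-Lipschitz. The first inclusion is immediate from~\eqref{eq:e08} and the crude bound $V_{T,h}(\mathcal{F}^a_{T,h})\le V_{T,h}(x)+T(\hat\gamma(\Delta_w)+\delta_1)$, which stays below $\ul\alpha(\Delta_1)$ for $T$ small; the second follows by adding the mismatch bound (made $\le 1/2$ by an extra smallness condition on $T^*,h^*$), mirroring the logic around~\eqref{eq:e20}--\eqref{eq:fe-delta} in a much shorter form. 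Once these containments are secured, summing the two brackets yields~\eqref{eq:e23}.
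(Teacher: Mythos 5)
Your route is the paper's own: the same decomposition $V^e-V=(V^a-V)+(V^e-V^a)$, Gronwall plus item~\ref{item:iIiSS-lips}) to compare the dual-rate exact map with the single-rate exact map, one-step consistency for the remaining piece, Lemma~\ref{L:02} for $|x(k)-x_c(k)|$, and the Lipschitz property \eqref{eq:0909} to convert the state mismatch into a Lyapunov mismatch; the $\delta/3$ versus $\delta/2$ budget split is immaterial. However, two of your parameter choices break the argument as written. First, taking $\Delta_1$ ``slightly larger than $\Delta_x$'' does not secure the containment you yourself flag as the main obstacle. The only a priori control on $|\mathcal{F}^a_{T,h}(x,w_T)|$ passes through the sandwich \eqref{eq:e08}: from $\underline\alpha(|\mathcal{F}^a_{T,h}(x,w_T)|)\le V_{T,h}(x)+T(\hat\gamma(\Delta_w)+\delta_1)\le\overline\alpha(\Delta_x)+1$ you only get $|\mathcal{F}^a_{T,h}(x,w_T)|\le\underline\alpha^{-1}(\overline\alpha(\Delta_x)+1)$, a quantity that in general exceeds $\Delta_x$ by a margin independent of $T$ because $\underline\alpha^{-1}\circ\overline\alpha\ge\mathrm{id}$. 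Your claim that $V_{T,h}(\mathcal{F}^a_{T,h}(x,w_T))$ ``stays below $\underline\alpha(\Delta_1)$ for $T$ small'' would require $\overline\alpha(\Delta_x)<\underline\alpha(\Delta_1)$, which fails for $\Delta_1$ close to $\Delta_x$. The repair is exactly the paper's choice $\Delta_1:=\max\{\Delta_x,\;\underline\alpha^{-1}(\overline\alpha(\Delta_x)+1)+1\}$, the extra $+1$ leaving room for the $\mathcal{F}^e$ containment after adding the (suitably small) state mismatch.

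Second, ``pick any $\Delta_3>0$'' is a slip. The decrease inequality \eqref{eq:e09} supplied by Definition~\ref{def:lyapunov-sp-iiss} is only guaranteed for $w\in\mathcal{L}_\infty(\Delta_2)\cap\mathcal{L}_{\hat\gamma}(\Delta_3)$, whereas the lemma assumes only $w\in\mathcal{L}_\infty(\Delta_w)$. You must therefore choose $\Delta_3$ large enough that the restriction of $w$ to a single sampling interval automatically satisfies the integral bound; with $T^*\le 1$ the choice $\Delta_3:=\hat\gamma(\Delta_w)$ works (this is the point of the paper's remark that $\mathcal{L}_\infty(\Delta_w)=\mathcal{L}_\infty(\Delta_2)\cap\mathcal{L}_{\hat\gamma}(\Delta_3)$ for signals on intervals of length less than one), while an arbitrarily small $\Delta_3$ does not. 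With these two choices corrected, the rest of your argument matches the paper's proof and goes through.
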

\begin{proof}
Let positive real numbers $(\Delta_x,\Delta_w,\delta)$ be given.
Denote $\Delta_1$ $:=$ $\max \{ \Delta_x$, $\underline\alpha^{-1} (\overline\alpha (\Delta_{x}) + 1) + 1 \}$, $\Delta_2 := \Delta_w$, $\Delta_3 := \tilde\gamma(\Delta_w)$ and $\delta_1 := \frac{\delta}{2}$.
Let $\Delta_1$ generate $T^*_1$, $h^*_1$ and $\tilde L$ from item~\ref{item:iIiSS-lips}).
Let the quadruple $(\Delta_1,\Delta_2,\Delta_3,\delta_1)$ generate $T^*_2$, $h^*_2$, $M$ and a SP-iIiSS Lyapunov function from item~\ref{item:iIiSS-app}) of Theorem~\ref{thm:iIiSS}.
Take any $\varepsilon_1 > 0$. Let the quadruple $(\Delta_1,\Delta_2,\varepsilon_1)$ generate $T^*_3$, $h^*_3$ and $L$ from Lemma~\ref{L:02}.
By item~\ref{item:iIiSS-lips}) of Theorem~\ref{thm:iIiSS}, there exist $T^*_4 > 0$, $h^*_4 > 0$ and $\Delta_u > 0$ such that $\abs{u_{T,h} (x)} \leq \Delta_u$ for all $x \in \Rn$ with $\abs{x} \leq \Delta_1$, all $T \in (0,T^*_4)$ and all $h \in (0,h^*_4)$.
Also, let $(\Delta_1,\Delta_u,\Delta_2)$ generate $\rho$, $T^*_5$ and $h^*_5$ from item~\ref{item:iIiSS-one-step}) of Theorem~\ref{thm:iIiSS}.
Take $T^*_6$ and $h^*_6$ such that
$$
M \left[ \rho(h^*_6) + \tilde{L} (e^{LT^*_6} - 1) \left( \varepsilon_1 + L e^{L(\ell-1)T_6^*} (\ell-1) \Delta_3 \right) \right] \leq \delta_1 .
$$
Let $T^*_7$ and $h_7^*$ be such that
$$
T^*_7 \Big[ \rho(h^*_7) + \tilde{L} (e^{LT^*_7} - 1) \left( \varepsilon_1 +  L e^{L(\ell-1)T^*_7} \Delta_3 \right) \Big] \leq 1 .
$$
Let $T^*_8$ be such that $T^*_8 (\hat\gamma (\Delta_2) + \delta_1) < 1$. Denote $T^* := \min \{ \min_{i\in\{1,\dots,8\}} T^*_i ,1 \}$ and $h^* := \min \{ \min_{i\in\{1,\dots,7\}} h_i^*, T^* \}$.

Pick an arbitrary $x(k)$ such that $\abs{x(k)} \leq \Delta_x$.
From the fact that $\mathcal{L}_\infty (\Delta_w) = \mathcal{L}_\infty (\Delta_2) \cap \mathcal{L}_{\hat\gamma} (\Delta_3)$ for all signals defined over a time interval of less than one unit of time, we have that for all $h \in (0,h^*_2)$ associated with each fixed $T \in (0,T^*_2)$, all $\abs{x(k)} \leq \Delta_x$ and all $w \in \mathcal{L}_\infty (\Delta_w)$
\begin{align*}
& V^a_{k+1} - V_k \leq T \left[ - \alpha(\abs{x(k)}) + \frac{1}{T} \int_{kT}^{(k+1)T} \hat\gamma(\abs{w(s)}) \mathrm{d}s + \delta_1 \right] . &
\end{align*}
So we get
\begin{align*}
V^e_{k+1} - V_k = & V^a_{k+1} - V_k + V^e_{k+1} - V^a_{k+1} & \\
\leq & T \left[ - \alpha(\abs{x(k)}) + \frac{1}{T} \int_{kT}^{(k+1)T} \hat\gamma(\abs{w(s)}) \mathrm{d}s + \delta_1 \right] + \abs{V^e_{k+1} - V^a_{k+1}} . &
\end{align*}
From item~\ref{item:iIiSS-app}) of Theorem~\ref{thm:iIiSS} and our choice of $T^*_8$, we have
\begin{align} \label{eq:e25}
\underline\alpha (\abs{\mathcal{F}^a_{T,h}(x,w_T)}) & \leq V^a_{k+1} \leq V_k + T (\hat\gamma (\Delta_2) + \delta_1) \leq \overline\alpha (\Delta_x) +1 . &
\end{align}
It follows from the definition of $\Delta_1$ that
\begin{align} \label{eq:fa-iiiss}
& \abs{\mathcal{F}^a_{T,h}(x,w_T)}< \Delta_1 . &
\end{align}
With the same arguments as those in proof of Theorem~\ref{thm:iISS} (cf.~\eqref{eq:fe-fes} and~\eqref{eq:fes-fas}), we have
\begin{align} \label{eq:e24}
& \abs{\mathcal{F}^e_{T,h}(x(k),w_T[k]) \! - \! \mathcal{F}^a_{T,h}(x(k),w_T[k])} \! \leq \! T (\rho(h) + \tilde{L} (e^{LT} - 1) \abs{x(k)-x_c(k)} ) . &
\end{align}
Applying Lemma~\ref{L:02} to the right-hand side of (\ref{eq:e24}) yields
\begin{align*}
& \abs{\mathcal{F}^e_{T,h}(x,w_T) - \mathcal{F}^a_{T,h}(x,w_T)} & \\
&\leq T \left( \rho(h) + \tilde{L} (e^{LT} - 1) \left( T \varepsilon_1 + L \sum_{i=0}^{\ell-2} e^{L(i+1)T} \int_{(k-i - 1)T}^{(k-i)T}\abs{w(s)} \mathrm{d}s \right) \right)  & \\
& \leq  T \left(\rho(h) + \tilde{L} (e^{LT} - 1) \left( T \varepsilon_1 + L e^{L(\ell-1)T} \sum_{i=0}^{\ell-2} \int_{(k-i - 1)T}^{(k-i)T}\abs{w(s)} \mathrm{d}s \right) \right) . &
\end{align*}
It follows from the fact that $w \in \mathcal{L}_\infty (\Delta_2)$ that
\begin{align*}
& \abs{\mathcal{F}^e_{T,h}(x,w_T) \! - \! \mathcal{F}^a_{T,h}(x,w_T)} \!\!\leq \! T \!\left(\!\rho(h) \! + \! \tilde{L} (e^{LT} \! - \! 1) \!\left( T \varepsilon_1 \!+\! L (\ell - 1) e^{L(\ell-1)T} \! \Delta_2 \right)\! \right) \! . &
\end{align*}
From the choice of $T^*_7$ and $h_7^*$, we obtain
\begin{align} \label{eq:e26}
& \abs{\mathcal{F}^e_{T,h}(x,w_T) - \mathcal{F}^a_{T,h}(x,w_T)} \leq 1 . &
\end{align}
Exploiting the last inequality of~\eqref{eq:e25} and~\eqref{eq:e26} gives
\begin{align}
\abs{\mathcal{F}^e_{T,h}(x,w_T)} & \leq \abs{\mathcal{F}^a_{T,h}(x,w_T)} + \abs{\mathcal{F}^e_{T,h}(x,w_T) - \mathcal{F}^a_{T,h}(x,w_T)}  \nonumber \\
& \leq \ul\alpha^{-1} (\ol\alpha (\Delta_x) + 1) + 1 \leq \Delta_1 . \label{eq:fe-iiiss}
\end{align}
By \eqref{eq:0909},~\eqref{eq:fa-iiiss} and the last inequality of~\eqref{eq:fe-iiiss}, we get
\begin{align*}
V^e_{k+1} - V_k \leq & T \left[ - \alpha(\abs{x(k)}) + \frac{1}{T} \int_{kT}^{(k+1)T} \hat\gamma(\abs{w(s)}) \mathrm{d}s + \delta_1 \right] & \nonumber \\
& + M \abs{\mathcal{F}^e_{T,h}(x(k),w_T[k]) - \mathcal{F}^a_{T,h}(x(k),w_T[k])} . &
\end{align*}
It follows from (\ref{eq:e24}) that
\begin{align*}
V^e_{k+1} - V_k \leq & T \left[ - \alpha(\abs{x(k)}) + \frac{1}{T} \int_{kT}^{(k+1)T} \hat\gamma(\abs{w(s)}) \mathrm{d}s + \delta_1 \right] & \nonumber\\
& + M \Big[ T\rho(h) +  \tilde{L} (e^{LT} - 1) \abs{x(k)-x_c(k)} \Big] .
\end{align*}
By Lemma~\ref{L:02}, we have
\begin{align*}
& V^e_{k+1} - V_k \leq T \left[ - \alpha(\abs{x(k)}) + \frac{1}{T} \int_{kT}^{(k+1)T} \hat\gamma(\abs{w(s)}) \mathrm{d}s + \delta_1 \right] & \\
& \;\; + M \Bigg[ T \rho(h) + \tilde{L} (e^{LT} - 1) \left( T \varepsilon_1 + L \sum_{i=0}^{\ell-2} e^{L(i+1)T} \int_{(k-i - 1)T}^{(k-i)T}\abs{w(s)} \mathrm{d}s \right) \Bigg] & \\
&\leq T \left[ - \alpha(\abs{x(k)}) + \frac{1}{T} \int_{kT}^{(k+1)T} \hat\gamma(\abs{w(s)}) \mathrm{d}s + \delta_1 \right] & \\
& \;\; + M \Bigg[ T \rho(h) + \tilde{L} (e^{LT} - 1) \left( T \varepsilon_1 + L e^{L(\ell-1)T} \sum_{i=0}^{\ell-2} \int_{(k-i - 1)T}^{(k-i)T}\abs{w(s)} \mathrm{d}s \right) \Bigg]. &
\end{align*}
It follows from the fact that $w \in \mathcal{L}_\infty (\Delta_2)$ that
\begin{align*}
V^e_{k+1} - V_k \leq & T \left[ - \alpha(\abs{x(k)}) + \frac{1}{T} \int_{kT}^{(k+1)T} \hat\gamma(\abs{w(s)}) \mathrm{d}s + \delta_1 \right] & \nonumber\\
& + T M \Bigg[ \rho(h) + \tilde{L} (e^{LT} - 1) \left( \varepsilon_1 + L e^{L(\ell-1)T} (\ell-1) \Delta_2 \right) \Bigg].
\end{align*}
By the choice of $T^*_6$ and $h^*_6$, we obtain
\begin{align*}
V^e_{k+1} - V_k \leq & T \left[ - \alpha(\abs{x(k)}) + \frac{1}{T} \int_{kT}^{(k+1)T} \hat\gamma(\abs{w(s)}) \mathrm{d}s + 2 \delta_1 \right] .
\end{align*}
This completes the proof.
\end{proof}

\end{document}